\tikzset{
dot/.style={
  circle, draw=black, very thin, minimum size=0.05cm, fill=black,
  inner sep=0, outer sep=0} }
\definecolor{myurlcolor}{rgb}{0.6,0,0}
\definecolor{mycitecolor}{rgb}{0,0,0.8}
\definecolor{myrefcolor}{rgb}{0,0,0.8}
\newtheorem{theorem}{Theorem} 
\newtheorem{corollary}[theorem]{Corollary}
\newtheorem{lemma}[theorem]{Lemma}
\newtheorem{exam}{Example}
\newtheorem{defi}{Definition}
\newtheorem*{rem}{Remarks}
\begin{document}

\title[Span$_2\{\mathcal{C}\}$]{Building the Bicategory Span$_2(\mathcal{C})$}
\author{Franciscus ~Rebro}
\address{Department of Mathematics\\
University of California\\
Riverside CA 92521\\
USA\\}
\email{rebro@math.ucr.edu}
\keywords{Category, Bicategory, Span, Span of spans, Pullback}
\subjclass[]{}

\begin{abstract}
Given any category $\mathcal{C}$ with pullbacks and a terminal object, we show that the data consisting of the objects of $\mathcal{C}$, the spans of $\mathcal{C}$, and the isomorphism classes of spans of spans of $\mathcal{C}$, forms a bicategory. We denote this bicategory $\mbox{Span}_2\{\mathcal{C}\}$, and show that this construction can be applied to give a bicategory of $n$-manifolds, cobordisms of those manifolds, and cobordisms of cobordisms.

\end{abstract}

\thanks{With heartfelt thanks to John Baez}

\maketitle


\section{Historical context and motivation}
As early as Jean B\'enabou's 1967 work \cite{Ben} introducing the concept of bicategory or weak 2-category, bicategories of spans have been studied. Informally, a span in some category is a pair of arrows with a common source. This simple concept can be seen as a kind of more flexible arrow, which enjoys the benefit of always being able to be turned around; as such, they have arisen in diverse areas of study, including general relativity and quantum mechanics. For instance, in general relativity one often considers the situation of a pair of $n$-dimensional spaces bridged by an $n+1$-dimensional spacetime, called a cobordism. There is a category $n\mbox{Cob}$ in which objects are $n$-manifolds (representing some space in various states) and morphisms are cobordisms of those manifolds (representing possible spacetimes bridging a `past space' to a `future space'). The cobordisms come equipped with a pair of inclusion arrows from the boundary spaces to the connecting spacetime, which makes this data what is called a \emph{cospan}. A cospan is simply a span in the opposite category, meaning cobordisms are certain spans in $\mbox{Diff}^{\mbox{\footnotesize op}}$ (here $\mbox{Diff}$ denotes the category of smooth manifolds and smooth maps between them). \\ \\
Quantum mechanics deals with the category $\mbox{Hilb}$ of Hilbert spaces and bounded linear operators, where the fact that the arrows always have linear-algebraic adjoints results in there being a \emph{dagger compact} full subcategory of finite dimensional Hilbert spaces. Khovanov in 2010 gave a categorified version of the Heisenberg algebra \cite{Kho}, and in a 2012 paper, Jeffrey Morton and Jamie Vicary \cite{MorVic} examined this idea from a different point of view. In their paper, they worked in a bicategory where the morphisms are spans and the 2-morphisms are spans of spans, and showed that their 2-morphisms could be used to express certain equations derived by Khovanov. \\ \\
Morton and Vicary began with a bicategory of groupoids, and constructed from it a symmetric monoidal bicategory of spans and spans of spans. Inspired by their work, we wish to show that an analogous, but different, construction holds given any \emph{category} with pullbacks, rather than a bicategory. More precisely, if $\mathcal{C}$ is a category with pullbacks, we will give a construction for a structure we denote $\mbox{Span}_2\{\mathcal{C}\}$, and prove that it is a bicategory. In $\mbox{Span}_2\{\mathcal{C}\}$, the objects are those of $\mathcal{C}$, the morphisms are spans in $\mathcal{C}$, and the 2-morphisms are isomorphism classes of spans of spans. 

\section{Preliminaries} Fix a category $\mathcal{C}$ with pullbacks and a terminal object. We begin with a number of lemmas concerning spans in $\mathcal{C}$, which will allow us to define a bicategory called Span$_2(\mathcal{C})$. Denote by $A,B,C,\ldots$ the objects of $\mathcal{C}$; these will be the objects of Span$_2(\mathcal{C})$. A \emph{span} in $\mathcal{C}$ from $A$ to $B$ is a diagram

\begin{center}
\begin{tikzpicture}[->,>=stealth',shorten >=2pt,auto,node distance=2cm, main node/.style={color=black!70!black,font=\sffamily}]

  \node[main node] (1) {$S$};
  \node[color=black,font=\sffamily\bfseries] (2)[below left of=1]  {$A$};
   \node[color=black,font=\sffamily\bfseries] (3)[below right of=1]  {$B$};

  \path[every node/.style={font=\sffamily\small}]
 (1)edge  node[above](6){}  (2);
   \path[every node/.style={font=\sffamily\small}](1) 
   edge node [above] (7){}(3);

\end{tikzpicture}
\end{center}
\noindent
where $S$ is an object in $\mathcal{C}$. By an abuse of notation, we often let $S$ denote the entire diagram that is the span from $A$ to $B$; when more specificity is needed, we write $_{A}S_{B}$ for this span. We will usually not need to mention the particular arrows that constitute a span. The collection of all spans from $A$ to $B$ is denoted Span$(A,B)$. In the precise definition of Span$_2(\mathcal{C})$ to come, the collections Span$(A,B)$ will constitute the hom-categories, whose objects are referred to as the morphisms (or 1-cells) of Span$_2(\mathcal{C})$, and whose morphisms (up to isomorphism class) are referred to as the 2-morphisms (or 2-cells) of Span$_2(\mathcal{C})$. To make this precise, we now prove Span$(A,B)$ is a category under the appropriate notions of morphisms, composition, and identity morphisms.

\begin{lemma} Taking elements of Span$(A,B)$ as objects, isomorphism classes of spans between elements of Span$(A,B)$ as morphisms, the pullback of spans as composition of morphisms, and the identity span as the identity morphism, Span$(A,B)$ is a category.
\end{lemma}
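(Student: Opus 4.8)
The plan is to verify the category axioms for $\mathrm{Span}(A,B)$ directly, treating the objects as spans $_AS_B$, the morphisms as isomorphism classes of "spans of spans," and composition as pullback. First I would pin down the relevant notions: a \emph{morphism of spans} from $_AS_B$ to $_AS'_B$ is an object $T$ together with maps to $S$ and $S'$ making the two evident triangles commute (i.e.\ the legs to $A$ and $B$ agree after composing through $T$); two such are isomorphic when there is an isomorphism between the apex objects commuting with all the structure maps. Composition of $[T]\colon S\to S'$ and $[U]\colon S'\to S''$ is defined by forming the pullback $T\times_{S'} U$, which inherits maps to $S$ and $S''$ and hence is again a span of spans; the identity on $_AS_B$ is the class of $S$ itself with both structure maps the identity.

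Next I would check that composition is well defined on isomorphism classes: if $T\cong \tilde T$ over $S,S'$ and $U\cong \tilde U$ over $S',S''$, then the induced map $T\times_{S'}U\to \tilde T\times_{S'}\tilde U$ is an isomorphism by the universal property of pullback (two isomorphisms between cospans induce an isomorphism of pullbacks). This is the step where I expect the main fuss, though not deep difficulty: pullbacks are only defined up to canonical isomorphism, so every statement here — well-definedness, the unit laws, associativity — lives at the level of isomorphism classes, and one must be careful that the chosen pullback apex and its maps to $S$ and $S''$ really do assemble into a span of spans (the legs down to $A$ and to $B$ must be checked to agree, which follows from the commuting triangles defining $T$ and $U$ together with the pullback square). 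The genuine obstacle, such as it is, is bookkeeping: keeping track of which triangles commute and invoking the universal property in the right direction each time.

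Then I would verify the unit laws and associativity. For the left and right unit laws, composing $[T]$ with the identity span-of-spans on $S$ or $S'$ amounts to forming a pullback of $T$ along an identity map, and $T\times_{S'} S' \to T$ is an isomorphism over everything in sight, so $[T\times_{S'}S'] = [T]$; symmetrically on the other side. For associativity, given composable $[T],[U],[V]$, both $(T\times_{S'}U)\times_{S''}V$ and $T\times_{S'}(U\times_{S''}V)$ are pullbacks of the same diagram, hence canonically isomorphic, and that canonical isomorphism respects the structure maps to $S$ and $S'''$, so the two iterated composites are equal as isomorphism classes. I would cite the standard "pasting of pullbacks" fact (two adjacent pullback squares paste to a pullback rectangle) to make the associativity argument clean rather than computing with universal properties twice.

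Finally I would remark that the existence of all these pullbacks is guaranteed by the standing hypothesis that $\mathcal{C}$ has pullbacks, and that nothing here uses the terminal object (that will be needed later, for the full bicategory structure, not for this lemma). The upshot is that $\mathrm{Span}(A,B)$ satisfies identity and associativity axioms and is therefore a category; the proof is essentially a sequence of appeals to the universal property of the pullback, organized so that each category axiom corresponds to a canonical isomorphism of pullback objects that is checked to be compatible with the outer span legs.
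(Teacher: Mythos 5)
Your proposal is correct and follows essentially the same route as the paper: define spans of spans and their isomorphism classes, compose by pullback, check well-definedness and the unit laws via the canonical isomorphism $T\times_{S'}S'\cong T$, and establish associativity by identifying both parenthesizations of the iterated pullback with a common limit (the paper constructs this limit $L$ of the base zigzag explicitly and exhibits the mutually inverse canonical arrows, where you invoke the standard pasting-of-pullbacks fact to the same effect). Your observation that the terminal object is not needed for this lemma is also consistent with the paper, which only uses it later.
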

\begin{proof} First we clarify our notion of morphism in Span$(A,B)$. Given two spans $S,T\in\,\mbox{Span}(A,B)$, a \emph{span of spans} $_{S}X_{T}$ is not only a span from $S$ to $T$, but one which makes the following diagram commute:

\begin{center}
\begin{tikzpicture}[->,>=stealth',shorten >=2pt,auto,node distance=2cm ,main node/.style={color=black!70!black,font=\sffamily\bfseries}]

  \node[main node] (1) {$S$};
  
              \node[color=black,font=\sffamily\bfseries] (2)[below of=1]  {$X$};  
              \node[main node] (3)[below of=2]  {$T$};

  \node[color=black,font=\sffamily\bfseries] (6)[left of=2]  {$A$};
   \node[color=black,font=\sffamily\bfseries] (7)[right of=2]  {$B$};

        \path[every node/.style={font=\sffamily\small}]
 (1)edge  node[above](25) {} (6);
   \path[every node/.style={font=\sffamily\small}](1) 
   edge node [above] (26){}(7);
  \path[every node/.style={font=\sffamily\small}]
 (3)edge  node[above](21){}  (6);
   \path[every node/.style={font=\sffamily\small}](3) 
   edge node [above] (22){}(7);

        \path[every node/.style={font=\sffamily\small}]
 (2)edge  node[above](25) {} (1);
   \path[every node/.style={font=\sffamily\small}](2) 
   edge node [above] (26){}(3);

\end{tikzpicture}
\end{center}

\noindent
Two spans of spans $_{S}X_{T}$ and $_{S}\tilde{X}_{T}$ are said to be \emph{isomorphic as spans of spans} if there is an isomorphism from $X$ to $\tilde{X}$ in $\mathcal{C}$ which makes the diagram below commute:

\begin{center}
\begin{tikzpicture}[->,>=stealth',auto,node distance=2cm,main node/.style={color=black!70!black,font=\sffamily\bfseries}]

  \node[main node] (1) {};
  
              \node[color=black,font=\sffamily\bfseries] (2)[below left of=1]  {$X$};  
              \node[main node] (3)[below right of=1]  {$\tilde{X}$};
              \node[main node] (4)[below right of=2]  {};

  \node[color=black,font=\sffamily\bfseries] (6)[left of=2]  {$A$};
   \node[color=black,font=\sffamily\bfseries] (7)[right of=3]  {$B$};    
 \node[color=black,font=\sffamily\bfseries] (8)[below of=4]  {$T$};    
 \node[color=black,font=\sffamily\bfseries] (9)[above of=1]  {$S$};

        \path[every node/.style={font=\sffamily\small}]
 (9)edge  node[above](25) {} (6);
   \path[every node/.style={font=\sffamily\small}](9) 
   edge node [above] (26){}(7);
  \path[every node/.style={font=\sffamily\small}]
 (8)edge  node[above](21){}  (6);
   \path[every node/.style={font=\sffamily\small}](8) 
   edge node [above] (22){}(7);

        \path[every node/.style={font=\sffamily\small}]
 (2)edge  node[above](25) {} (9);
   \path[every node/.style={font=\sffamily\small}](2) 
   edge node [above] (26){}(8);

        \path[every node/.style={font=\sffamily\small}]
 (3)edge  node[above](25) {} (9);
   \path[every node/.style={font=\sffamily\small}](3) 
   edge node [above] (26){}(8);

\path[every node/.style={font=\sffamily\small}]
 (2)edge  node[above](25) {$\cong$} (3);
   
\end{tikzpicture}
\end{center}

For our purpose of making Span$(A,B)$ into a category, if $X,\tilde{X}$ are isomorphic as spans of spans, then we declare $X$ and $\tilde{X}$ represent the same morphism from $S$ to $T$, and we write $X=\tilde{X}$. Having defined the objects and morphisms of Span$(A,B)$, we move on to composition of morphisms. Consider the diagram

\begin{center}
\begin{tikzpicture}[->,>=stealth',shorten >=2pt,auto,node distance=2cm,main node/.style={color=black!70!black,font=\sffamily\bfseries}]

  \node[main node] (1) {$S$};
  
              \node[color=black,font=\sffamily\bfseries] (2)[node distance=2.2cm, below of=1]  {$X$};  
              \node[main node] (3)[node distance=2cm, below of=2]  {$T$};
\node[color=black,font=\sffamily\bfseries] (4)[node distance=2cm, below of=3]  {$Y$}; 
\node[main node] (5)[node distance=2.2cm, below of=4]  {$U$};

  \node[color=black,font=\sffamily\bfseries] (6)[node distance=2.3cm, left of=3]  {$A$};
   \node[color=black,font=\sffamily\bfseries] (7)[node distance=2.3cm, right of=3]  {$B$};

        \path[every node/.style={font=\sffamily\small}]
 (1)edge  node[above](25) {} (6);
   \path[every node/.style={font=\sffamily\small}](1) 
   edge node [above] (26){}(7);
  \path[every node/.style={font=\sffamily\small}]
 (3)edge  node[above](21){}  (6);
   \path[every node/.style={font=\sffamily\small}](3) 
   edge node [above] (22){}(7);
  \path[every node/.style={font=\sffamily\small}]
 (5)edge  node[above](21){}  (6);
   \path[every node/.style={font=\sffamily\small}](5) 
   edge node [above] (22){}(7);

        \path[every node/.style={font=\sffamily\small}]
 (2)edge  node[above](25) {} (1);
   \path[every node/.style={font=\sffamily\small}](2) 
   edge node [above] (26){}(3);
        \path[every node/.style={font=\sffamily\small}]
 (4)edge  node[above](25) {} (3);
   \path[every node/.style={font=\sffamily\small}](4) 
   edge node [above] (26){}(5);

\end{tikzpicture}
\end{center}
\noindent
Making use of the pullback of $X$ and $Y$ over $T$, we obtain a span of spans from $S$ to $U$:

\begin{center}
\begin{tikzpicture}[->,>=stealth',shorten >=2pt,auto,node distance=1.75cm,main node/.style={color=black!70!black,font=\sffamily\bfseries}]

  \node[main node] (1) {$S$};
  
              \node[color=black,font=\sffamily\bfseries] (2)[below of=1]  {};  
              \node[main node] (3)[below of=2]  {$X \times_T Y$};
\node[color=black,font=\sffamily\bfseries] (4)[below of=3]  {}; 
\node[main node] (5)[below of=4]  {$U$};

  \node[color=black,font=\sffamily\bfseries] (6)[node distance=2.3cm, left of=3]  {$A$};
   \node[color=black,font=\sffamily\bfseries] (7)[node distance=2.3cm, right of=3]  {$B$};

        \path[every node/.style={font=\sffamily\small}]
 (1)edge  node[above](25) {} (6);
   \path[every node/.style={font=\sffamily\small}](1) 
   edge node [above] (26){}(7);
  \path[every node/.style={font=\sffamily\small}]
 (5)edge  node[above](21){}  (6);
   \path[every node/.style={font=\sffamily\small}](5) 
   edge node [above] (22){}(7);

        \path[every node/.style={font=\sffamily\small}]
 (3)edge  node[above](25) {} (1);
   \path[every node/.style={font=\sffamily\small}](3) 
   edge node [above] (26){}(5);
\end{tikzpicture}
\end{center}
\noindent
Here the arrow from $X \times_T Y$ to $S$ is the composite of the canonical arrow from $X \times_T Y$ to $X$ with the given arrow from $X$ to $S$ above, and the arrow from $X \times_T Y$ to $U$ is a similar composite. Observe that the diagram commutes as required: using that $X,Y$ are spans of spans, and the defining property of the pullback $X\times_T Y$, we have
$$X\times_T Y\to X\to S \to A= X\times_T Y \to X\to T \to A=X\times_T Y\to Y\to T \to A=X\times_T Y \to Y \to U \to A,$$
and similarly for the right hand side of the diagram. (The chains of arrows here stand for single composite arrows). Therefore we define $_{T}Y_{U} \circ _{S}X_{T}:=\,_{S}(X\times_T Y)_{U}$, and note that this is well-defined since any two pullbacks of a given cospan are canonically isomorphic, and such an isomorphism is also seen to be an isomorphism of spans of spans. We now show that this composition rule is associative. Consider the diagram underlying three composable spans of spans:

\begin{minipage}[b]{0.5\textwidth}
\centering
\begin{tikzpicture}[->,>=stealth',shorten >=2pt,auto,node distance=1.8cm,main node/.style={color=black!70!black,font=\sffamily\bfseries}]

  \node[main node] (1) {$S$};
  
              \node[color=black,font=\sffamily\bfseries] (2)[below of=1]  {$X$};  
              \node[main node] (3)[below of=2]  {$T$};
\node[color=black,font=\sffamily\bfseries] (4)[below of=3]  {$Y$}; 
\node[main node] (5)[below of=4]  {$U$};
\node[color=black,font=\sffamily\bfseries] (6)[below of=5]  {$Z$}; 
\node[main node] (7)[below of=6]  {$V$};

  \node[color=black,font=\sffamily\bfseries] (8)[left of=4]  {};
   \node[color=black,font=\sffamily\bfseries] (9)[right of=4]  {};       
  \node[color=black,font=\sffamily\bfseries] (10)[left of=8]  {$A$};
   \node[color=black,font=\sffamily\bfseries] (11)[right of=9]  {$B$};

        \path[every node/.style={font=\sffamily\small}]
 (1)edge  node[above](25) {} (10);
   \path[every node/.style={font=\sffamily\small}](1) 
   edge node [above] (26){}(11);
  \path[every node/.style={font=\sffamily\small}]
 (3)edge  node[above](21){}  (10);
   \path[every node/.style={font=\sffamily\small}](3) 
   edge node [above] (22){}(11);
  \path[every node/.style={font=\sffamily\small}]
 (5)edge  node[above](21){}  (10);
   \path[every node/.style={font=\sffamily\small}](5) 
   edge node [above] (22){}(11);
  \path[every node/.style={font=\sffamily\small}]
 (7)edge  node[above](21){}  (10);
   \path[every node/.style={font=\sffamily\small}](7) 
   edge node [above] (22){}(11);

        \path[every node/.style={font=\sffamily\small}]
 (2)edge  node[above](25) {} (1);
   \path[every node/.style={font=\sffamily\small}](2) 
   edge node [above] (26){}(3);
        \path[every node/.style={font=\sffamily\small}]
 (4)edge  node[above](25) {} (3);
   \path[every node/.style={font=\sffamily\small}](4) 
   edge node [above] (26){}(5);
        \path[every node/.style={font=\sffamily\small}]
 (6)edge  node[above](25) {} (5);
   \path[every node/.style={font=\sffamily\small}](6) 
   edge node [above] (26){}(7);
   
\end{tikzpicture}

\end{minipage}
\begin{minipage}[b]{0.5\textwidth}
\centering

\begin{tikzpicture}[->,>=stealth',auto,node distance=1.8cm,main node/.style={color=black!70!black,font=\sffamily\bfseries}]

  \node[main node] (1) {$S$};
  
              \node[color=black,font=\sffamily\bfseries] (2)[below of=1]  {};  
              \node[main node] (3)[below of=2]  {};
\node[color=black,font=\sffamily\bfseries] (4)[below of=3]  {}; 
\node[main node] (5)[below of=4]  {};
\node[color=black,font=\sffamily\bfseries] (6)[below of=5]  {}; 
\node[main node] (7)[below of=6]  {$V$};

  \node[color=black,font=\sffamily\bfseries] (8)[node distance=1.8cm, left of=4]  {$(X\times_T Y)\times_U Z$};
   \node[color=black,font=\sffamily\bfseries] (9)[node distance=1.8cm, right of=4]  {$X\times_T(Y\times_U Z)$};       
  \node[color=black,font=\sffamily\bfseries] (10)[node distance=2.2cm, left of=8]  {$A$};
   \node[color=black,font=\sffamily\bfseries] (11)[node distance=2.2cm, right of=9]  {$B$};

        \path[every node/.style={font=\sffamily\small}]
 (1)edge  node[above](25) {} (10);
   \path[every node/.style={font=\sffamily\small}](1) 
   edge node [above] (26){}(11);
  \path[every node/.style={font=\sffamily\small}]
 (7)edge  node[above](21){}  (10);
   \path[every node/.style={font=\sffamily\small}](7) 
   edge node [above] (22){}(11);
        \path[every node/.style={font=\sffamily\small}]
 (8)edge  node[above](25) {} (1);
   \path[every node/.style={font=\sffamily\small}](8) 
   edge node [above] (26){}(7);
        \path[every node/.style={font=\sffamily\small}]
 (9)edge  node[above](25) {} (1);
   \path[every node/.style={font=\sffamily\small}](9) 
   edge node [above] (26){}(7);

\end{tikzpicture}
\end{minipage}

 One can first take the pullback of $X$ and $Y$ over $T$, and then take the pullback of the result with $Z$ over $U$, to obtain $(X\times_T Y)\times_U Z$; similarly, one can form $X\times_T (Y\times_U Z)$. Both of these are morphisms from $S$ to $V$, and we will now show they are the same morphism, i.e. they are isomorphic as spans of spans. In the redrawn diagram below, $L$ denotes the limit of the base subdiagram determined by $_{S}X_{T}$, $_{T}Y_{U}$, and $_{U}Z_{V}$.

\begin{center}
\begin{tikzpicture}[->,>=stealth',node distance=2.25cm, auto]
  \node (S) {$S$};
  \node (X) [right of=S,above of=S] {$X$};
  \node (T) [below of=X,right of=X] {$T$};
  \node (Y) [above of=T,right of=T] {$Y$};
  \node (U) [below of=Y,right of=Y] {$U$};
  \node (Z) [above of=U,right of=U] {$Z$};
  \node (V) [below of=Z,right of=Z] {$V$};
  \node (XtimesY) [above of=X,right of=X] {$X\times_T Y$};
  \node (YtimesZ) [above of=Y,right of=Y] {$Y\times_U Z$};
  \node (XxYxz) [above of=XtimesY] {$(X\times_T Y)\times_U Z$};
  \node (xxYxZ) [above of=YtimesZ] {$X\times_T (Y\times_U Z)$};
  \node (L) [node distance=8cm, above of=Y] {$L$};
  \draw[->] (X) to node {} (S);
  \draw[->] (X) to node {} (T);
  \draw[->] (Y) to node {} (T);
  \draw[->] (Y) to node {} (U);
  \draw[->] (Z) to node {} (U);
  \draw[->] (Z) to node {} (V);
  \draw[->] (XtimesY) to node {} (X);
  \draw[->] (XtimesY) to node {} (Y);
  \draw[->] (YtimesZ) to node {} (Y);
  \draw[->] (YtimesZ) to node {} (Z);
  \draw[->] (XxYxz) to node {} (XtimesY);
  \draw[->] (XxYxz) to node {} (Z);
  \draw[->] (xxYxZ) to node {} (YtimesZ);
  \draw[->] (xxYxZ) to node {} (X);
  \draw[->,bend right=40] (L) to node {} (X);
  \draw[->] (L) to node {} (Y);
  \draw[->,bend left=40] (L) to node {} (Z);
  \draw[draw=white, -, line width=6pt] (L) -- (Y);
  \draw[draw=white, -, line width=6pt] (XxYxz) -- (Z);
  \draw[draw=white, -, line width=6pt] (xxYxZ) -- (X);
  \draw[->] (XxYxz) to node {} (Z);
  \draw[->] (xxYxZ) to node {} (X);
   \draw[draw=white, -, line width=6pt] (L) -- (Y);
  \draw[->] (L) to node {} (Y);
\end{tikzpicture}
\end{center}
Observe that $(X\times_T Y)\times_U Z$ is a cone on the base diagram of $X$, $Y$, and $Z$ that $L$ is built on - any two paths (chains of arrows) from $(X\times_T Y)\times_U Z$ to $T$ or to $U$ are equal, by definition of the pullback construction. The same is true for $X\times_T (Y\times_U Z)$ by the symmetry of the diagram. Therefore, by the definition of limit, there exist unique arrows $j$ and $k$ which make the whole diagram commute:

\begin{center}
\begin{tikzpicture}[->,>=stealth',node distance=2.25cm, auto]
  \node (S) {$S$};
  \node (X) [right of=S,above of=S] {$X$};
  \node (T) [below of=X,right of=X] {$T$};
  \node (Y) [above of=T,right of=T] {$Y$};
  \node (U) [below of=Y,right of=Y] {$U$};
  \node (Z) [above of=U,right of=U] {$Z$};
  \node (V) [below of=Z,right of=Z] {$V$};
  \node (XtimesY) [above of=X,right of=X] {$X\times_T Y$};
  \node (YtimesZ) [above of=Y,right of=Y] {$Y\times_U Z$};
  \node (XxYxz) [above of=XtimesY] {$(X\times_T Y)\times_U Z$};
  \node (xxYxZ) [above of=YtimesZ] {$X\times_T (Y\times_U Z)$};
  \node (L) [node distance=8cm, above of=Y] {$L$};
  \draw[->] (X) to node {} (S);
  \draw[->] (X) to node {} (T);
  \draw[->] (Y) to node {} (T);
  \draw[->] (Y) to node {} (U);
  \draw[->] (Z) to node {} (U);
  \draw[->] (Z) to node {} (V);
  \draw[->] (XtimesY) to node {} (X);
  \draw[->] (XtimesY) to node {} (Y);
  \draw[->] (YtimesZ) to node {} (Y);
  \draw[->] (YtimesZ) to node {} (Z);
  \draw[->] (XxYxz) to node {} (XtimesY);
  \draw[->] (XxYxz) to node {} (Z);
  \draw[->] (xxYxZ) to node {} (YtimesZ);
  \draw[->] (xxYxZ) to node {} (X);
  \draw[->,bend right=40] (L) to node {} (X);
  \draw[->] (L) to node {} (Y);
  \draw[->,bend left=40] (L) to node {} (Z);
  \draw[draw=white, -, line width=6pt] (L) -- (Y);
  \draw[draw=white, -, line width=6pt] (XxYxz) -- (Z);
  \draw[draw=white, -, line width=6pt] (xxYxZ) -- (X);
  \draw[->] (XxYxz) to node {} (Z);
  \draw[->] (xxYxZ) to node {} (X);
   \draw[draw=white, -, line width=6pt] (L) -- (Y);
  \draw[->] (L) to node {} (Y);
  \draw[->,dashed] (XxYxz) to node [swap] {$\exists !\,j$} (L);
  \draw[->,dashed] (xxYxZ) to node {$\exists !\,k$} (L);
\end{tikzpicture}
\end{center}
Next observe that $L$ is a cone on the (cospan) subdiagram determined by $X\times_T Y$, simply because it is the limit of a strictly larger diagram. Similarly, $L$ is a cone on the cospan determined by $Y\times_U Z$. Therefore, by the definition of the pullback construction, there are unique arrows $f$ and $g$ making these subdiagrams commute:

\begin{minipage}[b]{0.5\textwidth}
\centering
\begin{tikzpicture}[->,>=stealth',node distance=2cm, auto]
  \node (T) {$T$};
  \node (X) [left of=T,above of=T] {$X$};
  \node (Y) [right of=T,above of=T] {$Y$};
  \node (XxY) [right of=X,above of=X] {$X\times_T Y$};
  \node (L) [above of=XxY] {$L$};
  \draw[->] (X) to node {} (T);
  \draw[->] (Y) to node {} (T);
  \draw[->] (XxY) to node {} (X);
  \draw[->] (XxY) to node {} (Y);
 \draw[->,bend right] (L) to node {} (X);
  \draw[->,bend left] (L) to node {} (Y);
  \draw[->,dashed] (L) to node {$\exists !\,f$} (XxY);
\end{tikzpicture}

\end{minipage}
\begin{minipage}[b]{0.5\textwidth}
\centering
\begin{tikzpicture}[->,>=stealth',node distance=2cm, auto]
  \node (U) {$U$};
  \node (Y) [left of=U,above of=U] {$Y$};
  \node (Z) [right of=U,above of=U] {$Z$};
  \node (YxZ) [right of=Y,above of=Y] {$Y\times_U Z$};
  \node (L) [above of=YxZ] {$L$};
  \draw[->] (Y) to node {} (U);
  \draw[->] (Z) to node {} (U);
  \draw[->] (YxZ) to node {} (Y);
  \draw[->] (YxZ) to node {} (Z);
 \draw[->,bend right] (L) to node {} (Y);
  \draw[->,bend left] (L) to node {} (Z);
  \draw[->,dashed] (L) to node {$\exists !\,g$} (YxZ);
\end{tikzpicture}
\end{minipage}
Now using $f$ and $g$ we see that $L$ is also a cone on two other subdiagrams involving the iterated pullbacks $(X\times_T Y)\times_U Z$ and $X\times_T (Y\times_U Z)$, providing two more unique arrows that make these commute:

\begin{minipage}[b]{0.5\textwidth}
\centering
\begin{tikzpicture}[->,>=stealth',node distance=2cm, auto]
  \node (U) {$U$};
  \node (XxY) [left of=U,above of=U] {$X\times_T Y$};
  \node (Z) [right of=U,above of=U] {$Z$};
  \node (XxYxz) [right of=XxY,above of=XxY] {$(X\times_T Y)\times_U Z$};
  \node (L) [above of=XxYxz] {$L$};
  \draw[->] (XxY) to node {} (U);
  \draw[->] (Z) to node {} (U);
  \draw[->] (XxYxz) to node {} (XxY);
  \draw[->] (XxYxz) to node {} (Z);
 \draw[->,bend right=35] (L) to node [swap] {f} (XxY);
  \draw[->,bend left=35] (L) to node {} (Z);
  \draw[->,dashed] (L) to node {$\exists !\,j^{-1}$} (XxYxz);
\end{tikzpicture}

\end{minipage}
\begin{minipage}[b]{0.5\textwidth}
\centering
\begin{tikzpicture}[->,>=stealth',node distance=2cm, auto]
   \node (T) {$T$};
  \node (X) [left of=T,above of=T] {$X$};
  \node (YxZ) [right of=T,above of=T] {$Y\times_U Z$};
  \node (xxYxZ) [right of=X,above of=X] {$X\times_T (Y\times_U Z)$};
  \node (L) [above of=xxYxZ] {$L$};
  \draw[->] (X) to node {} (T);
  \draw[->] (YxZ) to node {} (T);
  \draw[->] (xxYxZ) to node {} (X);
  \draw[->] (xxYxZ) to node {} (YxZ);
 \draw[->,bend right=35] (L) to node {} (X);
  \draw[->,bend left=35] (L) to node {g} (YxZ);
  \draw[->,dashed] (L) to node {$\exists !\,k^{-1}$} (xxYxZ);
\end{tikzpicture}
\end{minipage}
We have called these two new arrows $j^{-1}$ and $k^{-1}$ because, as we will shortly prove, they are indeed inverses of $j$ and $k$ respectively. Consider the following diagram, in which the two arrows arriving at $X\times_T Y$ are the same, and so are the two arriving at $Z$:

\begin{center}
\begin{tikzpicture}[->,>=stealth',node distance=2.5cm, auto]
\node (XxY) {$X\times_T Y$};
\node (XxYxz) [above of=XxY,right of=XxY] {$(X\times_T Y)\times_U Z$};
\node (Z) [below of=XxYxz,right of=XxYxz] {$Z$};
\node (XxYxz1) [above of=XxYxz] {$(X\times_T Y)\times_U Z$};
\node (U) [below of=Z,left of=Z] {$U$};
\draw[->] (XxYxz) to node {} (XxY);
\draw[->] (XxYxz) to node {} (Z);
\draw[->,dashed] (XxYxz1) to node {$\exists !\,1_{(X\times_T Y)\times_U Z}$} (XxYxz);
\draw[->,bend right=50] (XxYxz1) to node {} (XxY);
\draw[->,bend left=50] (XxYxz1) to node {} (Z);
\draw[->] (XxY) to node {} (U);
\draw[->] (Z) to node {} (U);
\end{tikzpicture}
\end{center}
By definition of the pullback, there is a unique arrow from $(X\times_T Y)\times_U Z$ to itself which makes the above diagram commute, and the identity on $(X\times_T Y)\times_U Z$ is an arrow which trivially satisfies that role. However, due to the commuting properties of the arrows $j$ and $j^{-1}$, one easily sees that the composite $j^{-1}\circ j$ also makes this diagram commute. Therefore, $j^{-1}\circ j=1_{(X\times_T Y)\times_U Z}$. We must also see that $j\circ j^{-1}=1_L$, which is accomplished in a similar fashion, by considering

\begin{center}
\begin{tikzpicture}[->,>=stealth',node distance=2cm, auto]
\node (X) {$X$};
\node (L) [above of=X,right of=X] {$L$};
\node (Y) [below of=L] {$Y$};
\node (Z) [below of=L,right of=L] {$Z$};
\node (T) [node distance=1cm, below of=X, right of=X] {$T$};
\node (U) [node distance=1cm, below of=Y, right of=Y] {$U$};
\node (L1) [above of=L] {$L$};
\draw[->] (X) to node {} (T);
\draw[->] (Y) to node {} (T);
\draw[->] (Y) to node {} (U);
\draw[->] (Z) to node {} (U);
\draw[->] (L) to node {} (X);
\draw[->] (L) to node {} (Y);
\draw[->] (L) to node {} (Z);
\draw[->,dashed] (L1) to node {$\exists !\,1_L$} (L);
\draw[->,bend right] (L1) to node {} (X);
\draw[->,bend right] (L1) to node {} (Y);
\draw[draw=white, -, line width=6pt] (L) -- (X);
\draw[->] (L) to node {} (X);
\draw[->,bend left] (L1) to node {} (Z);
\end{tikzpicture}
\end{center}
Here the two arrows arriving at $X$ are the same, as are the two arrows arriving at $Y$ and at $Z$. By the definition of limit, this gives a unique arrow from $L$ to itself that makes the diagram commute, and the identity $1_L$ trivially satisfies this role. Observe that the commuting properties of $j^{-1}$ and $j$ imply that $j\circ j^{-1}$ also satisfies this role, and hence $j\circ j^{-1}=1_L$. We thus see that $j$ and $j^{-1}$ are a pair of isomorphisms, and an identical argument holds for $k$ and $k^{-1}$. With composition, we obtain an isomorphism $k^{-1}\circ j$ from $(X\times_T Y)\times_U Z$ to $X\times_T (Y\times_U Z)$ which makes the entire diagram below commute:

\begin{center}
\begin{tikzpicture}[->,>=stealth',auto,node distance=1.75cm,main node/.style={color=black!70!black,font=\sffamily\bfseries}]

  \node[main node] (1) {$S$};
  
              \node[color=black,font=\sffamily\bfseries] (2)[below of=1]  {};  
              \node[main node] (3)[below of=2]  {};
\node[color=black,font=\sffamily\bfseries] (4)[below of=3]  {}; 
\node[main node] (5)[below of=4]  {};
\node[color=black,font=\sffamily\bfseries] (6)[below of=5]  {}; 
\node[main node] (7)[below of=6]  {$V$};

  \node[color=black,font=\sffamily\bfseries] (8)[node distance=2cm, left of=4]  {$(X\times_T Y)\times_U Z$};
   \node[color=black,font=\sffamily\bfseries] (9)[node distance=2cm, right of=4]  {$X\times_T(Y\times_U Z)$};       
  \node[color=black,font=\sffamily\bfseries] (10)[node distance=2.3cm, left of=8]  {$A$};
   \node[color=black,font=\sffamily\bfseries] (11)[node distance=2.3cm, right of=9]  {$B$};

        \path[every node/.style={font=\sffamily\small}]
 (1)edge  node[above](25) {} (10);
   \path[every node/.style={font=\sffamily\small}](1) 
   edge node [above] (26){}(11);
  \path[every node/.style={font=\sffamily\small}]
 (7)edge  node[above](21){}  (10);
   \path[every node/.style={font=\sffamily\small}](7) 
   edge node [above] (22){}(11);
        \path[every node/.style={font=\sffamily\small}]
 (8)edge  node[above](25) {} (1);
   \path[every node/.style={font=\sffamily\small}](8) 
   edge node [above] (26){}(7);
        \path[every node/.style={font=\sffamily\small}]
 (9)edge  node[above](25) {} (1);
   \path[every node/.style={font=\sffamily\small}](9) 
   edge node [above] (26){}(7);
\draw[->,dashed] (8) to node {$\cong$} (9);
\draw[->,dashed] (8) to node [swap] {$k^{-1}\circ j$} (9);

\end{tikzpicture}
\end{center}
To verify commutativity, note that
$$(X\times_T Y)\times_U Z \rightarrow S = (X\times_T Y)\times_U Z \xrightarrow{j} L \rightarrow S =(X\times_T Y)\times_U Z \xrightarrow{k^{-1}\circ j}X\times_T (Y\times_U Z) \rightarrow S$$
using the commutativity properties of $j$ and $k^{-1}$; the bottom half of the diagram is commutative in the same way. This establishes that $(X\times_T Y)\times_U Z$ and $X\times_T (Y\times_U Z)$ are isomorphic as spans of spans, so $Z\circ (Y\circ X)=(Z\circ Y)\circ X$ - our composition rule for Span$(A,B)$ is associative. [\emph{Remark: this technique of showing two iterated pullbacks are isomorphic as spans of spans, by identifying both of them as canonically isomorphic to some limit, will be used repeatedly in this paper.}] Next we handle the issue of identity morphisms. The following diagram illustrates the identity morphism on the object $_{A}S_{B}$:

\begin{center}
\begin{tikzpicture}[->,>=stealth',node distance=2cm, auto]
 \node (A) {$A$};
 \node (S) [above of=A,right of=A] {$S$};
 \node (B) [below of=S,right of=S] {$B$};
 \node (S1) [below of=S] {$S$};
 \node (S2) [below of=S1] {$S$};
 \draw[->] (S) to node {} (A);
 \draw[->] (S) to node {} (B);
 \draw[->] (S2) to node {} (A);
 \draw[->] (S2) to node {} (B);
 \draw[->] (S1) to node {$1_S$} (S);
 \draw[->] (S1) to node {$1_S$}(S2);
\end{tikzpicture}
\end{center}
Denote this span of spans by Id$_S$. To see that this morphism behaves as an identity should, we consider these two diagrams:

\begin{minipage}[b]{0.5\textwidth}
\centering
\begin{tikzpicture}[->,>=stealth',node distance=2cm, auto]
  \node (S) {$S$};
  \node (S1) [below of=S] {$S$};
  \node (S2) [below of=S1] {$S$};
  \node (X) [below of=S2] {$X$};
  \node (T) [below of=X] {$T$};
  \node (A) [node distance=2.8cm, left of=S2] {$A$};
  \node (B) [node distance=2.8 cm, right of=S2] {$B$};
  \draw[->] (S) to node {} (A);
  \draw[->] (S) to node {} (B);
  \draw[->] (S2) to node {}(A);
  \draw[->] (S2) to node {} (B);
  \draw[->] (T) to node {} (A);
  \draw[->] (T) to node {} (B);
  \draw[->] (S1) to node [swap] {$1_S$} (S);
  \draw[->] (S1) to node {$1_S$} (S2);
  \draw[->] (X) to node {} (S2);
  \draw[->] (X) to node {} (T);
\end{tikzpicture}

\end{minipage}
\begin{minipage}[b]{0.5\textwidth}
\centering
\begin{tikzpicture}[->,>=stealth',node distance=2cm, auto]
   \node (S) {$S$};
  \node (X) [below of=S] {$X$};
  \node (T) [below of=X] {$T$};
  \node (T1) [below of=T] {$T$};
  \node (T2) [below of=T1] {$T$};
  \node (A) [node distance=2.8cm, left of=S2] {$A$};
  \node (B) [node distance=2.8 cm, right of=S2] {$B$};
  \draw[->] (S) to node {} (A);
  \draw[->] (S) to node {} (B);
  \draw[->] (T) to node {}(A);
  \draw[->] (T) to node {} (B);
  \draw[->] (T2) to node {} (A);
  \draw[->] (T2) to node {} (B);
  \draw[->] (X) to node{} (S);
  \draw[->] (X) to node {} (T);
  \draw[->] (T1) to node {$1_T$} (T);
  \draw[->] (T1) to node [swap]{$1_T$} (T2);
\end{tikzpicture}

\end{minipage}

Recall that when one of the legs of a pullback square is an identity morphism, the resulting pullback object is canonically isomorphic to the tail of the other leg. Diagramatically:

\begin{center}
\begin{tikzpicture}[->,>=stealth',node distance=2cm, auto]
 \node (S) {$S$};
 \node (X) [above of=S,left of=S] {$X$};
 \node (S1) [above of=S,right of=S] {$S$};
 \node (X1) [above of=X,right of=X] {$X\times_S S\cong X$};
 \draw[->] (X1) to node [swap]{$1_X$} (X);
 \draw[->] (X1) to node {$a$} (S1);
 \draw[->] (X) to node [swap]{$a$} (S);
 \draw[->] (S1) to node {$1_S$} (S);
\end{tikzpicture}
\end{center}
Therefore, we obtain the identities $_{S}X_{T}\circ \mbox{Id}_S=\, {_{S}X_T}=\mbox{Id}_T\circ \,_{S}X_{T}$, completing the proof that Span$(A,B)$ with isomorphism classes of spans of spans as its morphisms is a category.

\end{proof}
At this stage we introduce, for any object $A$, the functor $I_A$ from the terminal category to Span$(A,A)$, defined as follows: the action of $I_A$ on the unique object of the terminal category is the identity span $_{A}A_A$

\begin{center}
\begin{tikzpicture}[->,>=stealth',auto,node distance=2cm, main node/.style={color=black!70!black,font=\sffamily}]

  \node[main node] (1) {$A$};
  \node[color=black,font=\sffamily\bfseries] (2)[below left of=1]  {$A$};
   \node[color=black,font=\sffamily\bfseries] (3)[below right of=1]  {$A$};

  \path[every node/.style={font=\sffamily\small}]
 (1)edge  node[above](6){$1_A$}  (2);
   \path[every node/.style={font=\sffamily\small}](1) 
   edge node [above] (7){$1_A$}(3);

\end{tikzpicture}
\end{center}
and the action of $I_A$ on the unique arrow of the terminal category is the identity morphism Id$_{_{A}A_A}$

\begin{center}
\begin{tikzpicture}[->,>=stealth',node distance=2cm, auto]
 \node (A) {$A$};
 \node (A1) [right of=A,above of=A] {$A$};
 \node (A2) [right of=A] {$A$};
 \node (A3) [below of=A2] {$A$};
 \node (A4) [right of=A2] {$A$};
 \draw[->] (A1) to node [swap]{$1_A$} (A);
 \draw[->] (A1) to node {$1_A$} (A4);
 \draw[->] (A2) to node {$1_A$} (A1);
 \draw[->] (A2) to node {$1_A$} (A3);
 \draw[->] (A3) to node {$1_A$} (A);
 \draw[->] (A3) to node [swap]{$1_A$}(A4);
\end{tikzpicture}
\end{center}

This is a functor by construction, and there is nothing to prove about it for the moment, but it will appear later. We record this as
\begin{lemma} For any object $A$ of $\mathcal{C}$, there is a functor $I_A:1\rightarrow \mbox{Span}(A,A)$, where $1$ denotes the terminal category.
\end{lemma}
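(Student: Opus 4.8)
The plan is to unwind what a functor out of the terminal category $1$ actually is. Since $1$ has a single object $\star$ and a single morphism $1_\star$, a functor $I_A:1\to\mbox{Span}(A,A)$ amounts to no more than a choice of one object of $\mbox{Span}(A,A)$ to serve as $I_A(\star)$, together with an assignment for $I_A(1_\star)$, subject to the two functor axioms. The construction already displayed provides exactly this data: $I_A$ sends $\star$ to the identity span ${}_A A_A$, and sends $1_\star$ to the span of spans $\mbox{Id}_{{}_A A_A}$ pictured above. So the only thing left is to confirm that the two axioms hold.

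First I would check preservation of identities, i.e. $I_A(1_\star)=1_{I_A(\star)}$. This is where the previous lemma does the work: that lemma shows $\mbox{Span}(A,A)$ is genuinely a category and, in its treatment of identity morphisms, identifies $\mbox{Id}_{{}_A A_A}$ as precisely the identity endomorphism of the object ${}_A A_A$. Hence $I_A(1_\star)$ is by definition the identity on $I_A(\star)$, and this axiom is immediate. Next I would check compatibility with composition. The only composable pair of morphisms in $1$ is $(1_\star,1_\star)$, and $1_\star\circ 1_\star=1_\star$, so the requirement collapses to the single equation $\mbox{Id}_{{}_A A_A}\circ\mbox{Id}_{{}_A A_A}=\mbox{Id}_{{}_A A_A}$; this holds because in any category the identity morphism of an object composes with itself to yield itself (concretely, the pullback of the two identity legs over the apex $A$ of ${}_A A_A$ is canonically $A$ again, carrying the evident span-of-spans structure). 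With both axioms verified, $I_A$ is a functor.

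I do not expect any real obstacle here: the statement is essentially a formality, and all of its content is already carried by the previous lemma, which guarantees that $\mbox{Span}(A,A)$ is a category with identity morphisms as described. The one point worth being careful about is making sure that the diagram we have labelled $\mbox{Id}_{{}_A A_A}$ is the identity morphism in the precise sense defined earlier --- that is, the $1_S$-legged span of spans on $S={}_A A_A$ --- rather than merely some identity-looking span of spans; but this coincidence was already recorded when identity morphisms were discussed in the proof of the first lemma, so nothing new is needed.
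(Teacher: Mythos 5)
Your proposal is correct and matches the paper's approach: the paper defines $I_A$ on the unique object and unique arrow of $1$ exactly as you do and simply asserts ``this is a functor by construction,'' while you spell out the (trivial) verification of the two functor axioms. Nothing is missing.
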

\begin{proof} Just given.
\end{proof}
\section{Horizontal composition \& the associator}
The next step toward showing Span$_2(\mathcal{C})$ is a bicategory is to define a composition law $$;_{A,B,C}:\mbox{Span}(A,B)\times \mbox{Span}(B,C) \rightarrow \mbox{Span}(A,C)$$ for all objects $A,B,C$ of $\mathcal{C}$. This law has to be a bifunctor. We will begin by defining the action of $;_{A,B,C}$ on objects and morphisms, and then show it is in fact a bifunctor. For an object $(_{A}S_{B},_{B}S_{C}')$ in $\mbox{Span}(A,B)\times \mbox{Span}(B,C)$, define $;_{A,B,C}(S,S')= \,_{A}(S\times_B S')_{C}$ (see the diagram below).

\begin{center}
\begin{tikzpicture}[->,>=stealth',node distance=2cm,auto]
 \node (A) {$A$};
 \node (S) [above of=A,right of=A] {$S$};
 \node (B) [below of=S,right of=S] {$B$};
 \node (S1) [above of=B,right of=B] {$S'$};
 \node (C) [below of=S1,right of=S1] {$C$};
 \node (SxS1) [above of=S,right of=S] {$S\times_B S'$};
 \draw[->] (S) to node {} (A);
 \draw[->] (S) to node {} (B);
 \draw[->] (S1) to node {} (B);
 \draw[->] (S1) to node {} (C);
 \draw[->] (SxS1) to node {} (S);
 \draw[->] (SxS1) to node {} (S1);
\end{tikzpicture}
\end{center}
In this definition, we stipulate that any two pullbacks of $S,S'$ over $B$ represent the same composite of $S$ and $S'$. Next we define the action of $;_{A,B,C}$ on morphisms. Given a morphism $(_{S}X_{T},_{S'}X'_{T'})$ of $\mbox{Span}(A,B)\times \mbox{Span}(B,C)$, we need $;_{A,B,C}(X,X')$ to be some representative of an isomorphism class of spans of the spans $S\times_B S'$ and $T\times_B T'$. Observe that $X\times_B X'$ is a span from $S\times_B S'$ to $T\times_B T'$, as the next diagram shows.

\begin{center}
\begin{tikzpicture}[->,>=stealth',node distance=3cm, auto]
 \node (A) {$A$};
 \node (S) [above of=A,right of=A] {$S$};
 \node (B) [below of=S,right of=S] {$B$};
 \node (S1) [above of=B,right of=B] {$S'$};
 \node (C) [below of=S1,right of=S1] {$C$};
 \node (T) [below of=A,right of=A] {$T$};
 \node (T1) [below of=B,right of=B] {$T'$};
 \node (X) [below of=S] {$X$};
 \node (X1) [below of=S1] {$X'$};
 \node (SxS1) [above of=S,right of=S] {$S\times_B S'$};
 \node (TxT1) [below of=T,right of=T] {$T\times_B T'$};
 \node (XxX1) [above of=B] {$X\times_B X'$};
 \draw[->] (S) to node {} (A);
 \draw[->] (S) to node {} (B);
 \draw[->] (S1) to node {} (B);
 \draw[->] (S1) to node {} (C);
 \draw[->] (SxS1) to node {} (S);
 \draw[->] (SxS1) to node {} (S1);
 \draw[->] (T) to node {} (A);
 \draw[->] (T) to node {}(B);
 \draw[->] (T1) to node {}(B);
 \draw[->] (T1) to node {} (C);
 \draw[->] (TxT1) to node{} (T);
 \draw[->] (TxT1) to node {}(T1);
 \draw[->] (X) to node {}(S);
 \draw[->] (X) to node {}(T);
 \draw[->] (X1) to node {}(S1);
 \draw[->] (X1) to node {}(T1);
 \draw[draw=white, -, line width=6pt] (XxX1) -- (X);
 \draw[draw=white, -, line width=6pt] (XxX1) -- (X1);
 \draw[->] (XxX1) to node {} (X);
 \draw[->] (XxX1) to node {}(X1);
 \draw[->,dashed] (XxX1) to node {$\exists !\,p$} (SxS1);
 \draw[->,white,line width=6pt,bend right=35] (XxX1) to node {} (TxT1);
 \draw[->,dashed,bend right=35] (XxX1) to node [swap]{$\exists !\,q$} (TxT1);

\end{tikzpicture}
\end{center}
The two unique dashed arrows in the above diagram arise because $X\times_B X'$ is a cone on the cospans $S \rightarrow B \leftarrow S'$ and $T \rightarrow B \leftarrow T'$. Because these arrows make the diagram commute by the universal properties of $S\times_B S'$ and $T\times_B T'$, we see $X\times_B X'$ is indeed a span of spans.  We let $X;X'$ denote the action of $;_{A,B,C}$ on $(X,X')$ and call it `horizontal composition of spans of spans' (as opposed to `vertical composition', the composition law within the hom-category Span$(A,B)$). Also, we call the action of $;_{A,B,C}$ on objects $(S,S')$ `composition of spans', and denote it $S;S'$. Now to see $;_{A,B,C}$ is a bifunctor, we need to confirm that it preserves identities and morphism composition.

\begin{center}
\begin{tikzpicture}[->,>=stealth',node distance=2cm,auto]
 \node (S) {$S$};
 \node (S1) [below of=S] {$S$};
 \node (S2) [below of=S1] {$S$};
 \node (A) [left of=S1] {$A$}; 
 \node (B) [right of=S1] {$B$};
 \node (T1) [right of=B] {$S'$};
 \node (T) [above of=T1] {$S'$};
 \node (T2) [below of=T1] {$S'$};
 \node (C) [right of=T1] {$C$};
 \draw[->] (S) to node {}(A);
 \draw[->] (S) to node {} (B);
 \draw[->] (T) to node {} (B);
 \draw[->] (T) to node {}(C);
 \draw[->] (S2) to node {}(A);
 \draw[->] (S2) to node {}(B);
 \draw[->] (T2) to node {}(B);
 \draw[->] (T2) to node {} (C);
 \draw[->] (S1) to node[swap]{$1_S$}(S);
 \draw[->] (S1) to node{$1_S$}(S2);
 \draw[->] (T1) to node{$1_{S'}$}(T);
 \draw[->](T1) to node[swap]{$1_{S'}$}(T2);
\end{tikzpicture}
\end{center}

\begin{center}
\begin{tikzpicture}[->,>=stealth',node distance=3.2cm,auto]
 \node (S) {$S$};
 \node (S1) [below of=S] {$S$};
 \node (S2) [below of=S1] {$S$};
 \node (A) [left of=S1] {$A$}; 
 \node (B) [right of=S1] {$B$};
 \node (T1) [right of=B] {$S'$};
 \node (T) [above of=T1] {$S'$};
 \node (T2) [below of=T1] {$S'$};
 \node (C) [right of=T1] {$C$};
 \node (SxT) [above of=S,right of=S] {$S\times_B S'$};
 \node (S2xT2) [below of=S2,right of=S2] {$S\times_B S'$};
 \node (S1xT1) [above of=B] {$S\times_B S'$};
 \draw[->] (S) to node {}(A);
 \draw[->] (S) to node {} (B);
 \draw[->] (T) to node {} (B);
 \draw[->] (T) to node {}(C);
 \draw[->] (S2) to node {}(A);
 \draw[->] (S2) to node {}(B);
 \draw[->] (T2) to node {}(B);
 \draw[->] (T2) to node {} (C);
 \draw[->] (S1) to node{$1_S$}(S);
 \draw[->] (S1) to node[swap]{$1_S$}(S2);
 \draw[->] (T1) to node[swap]{$1_S$}(T);
 \draw[->](T1) to node{$1_S$}(T2);
 \draw[->](SxT) to node {}(S);
 \draw[->] (SxT) to node {}(T);
 \draw[->](S2xT2) to node {}(S2);
 \draw[->] (S2xT2) to node {}(T2);
 \draw[->,dashed] (S1xT1) to node[swap]{$\exists !\,1_{S\times_B S'}$} (SxT);
 \draw[->,white,line width=7pt, bend right=32] (S1xT1) to node {}(S2xT2);
 \draw[->,dashed,bend right=32] (S1xT1) to node [swap] {$\exists !\,1_{S\times_B S'}$} (S2xT2);
 \draw[->,white,line width=6pt] (S1xT1) to node {}(S1);
 \draw[->,white,line width=6pt] (S1xT1) to node {} (T1);
 \draw[->](S1xT1) to node{}(S1);
 \draw[->](S1xT1) to node{}(T1);
\end{tikzpicture}
\end{center}
The image of the identity morphism $(\mbox{Id}_S,\mbox{Id}_{S'})$ through $;_{A,B,C}$ is indeed the identity morphism from $S\times_B S'$ to itself, as the above two diagrams illustrate. Next we need to show preservation of morphism composition through $;_{A,B,C}$, also called the interchange law of a bicategory. We begin with two pairs of composable morphisms:

\begin{center}
\begin{tikzpicture}[->,>=stealth',node distance=2cm,auto]
 \node (S) {$S$};
 \node (X) [below of=S] {$X$};
 \node (T) [below of=X] {$T$};
 \node (Y) [below of=T] {$Y$};
 \node (U) [below of=Y] {$U$};
 \node (A) [node distance=2.5cm, left of=T] {$A$};
 \node (B)[node distance=2.5cm, right of=T] {$B$};
 \node (T1) [node distance=2.5cm, right of=B] {$T'$};
 \node (X1) [above of=T1] {$X'$};
 \node (S1) [above of=X1] {$S'$};
 \node (Y1) [below of=T1] {$Y'$};
 \node (U1) [below of=Y1] {$U'$};
 \node (C) [node distance=2.5cm, right of=T1] {$C$};
 \draw[->] (S) to node {}(A);
\draw[->] (S) to node {}(B);
\draw[->] (T) to node {}(A);
 \draw[->] (T) to node {}(B);
 \draw[->](U) to node {}(A);
 \draw[->](U) to node {}(B);
 \draw[->](S1) to node {}(B);
 \draw[->](S1) to node {}(C);
 \draw[->](T1) to node {} (B);
 \draw[->](T1) to node {}(C);
 \draw[->] (U1) to node {} (B);
 \draw[->] (U1) to node {}(C);
 \draw[->] (X) to node {} (S);
 \draw[->] (X) to node {}(T);
 \draw[->] (Y) to node {}(T);
 \draw[->](Y) to node {}(U);
 \draw[->] (X1) to node {}(S1);
 \draw[->] (X1) to node {} (T1);
 \draw[->] (Y1) to node {} (T1);
 \draw[->] (Y1) to node {} (U1);
\end{tikzpicture}
\end{center}
The result of composing these morphisms before applying $;_{A,B,C}$ is

\begin{center}
\begin{tikzpicture}[->,>=stealth',node distance=2cm,auto]
 \node (S) {$S$};
 \node (XxY) [below of=S] {$X\times_T Y$};
 \node (U) [below of=XxY] {$U$};
 \node (A) [left of=XxY] {$A$};
 \node (B) [right of=XxY] {$B$};
 \node (X1xY1) [right of=B] {$X'\times_{T'} Y'$};
 \node (S1) [above of=X1xY1] {$S'$};
 \node (U1) [below of=X1xY1] {$U'$};
 \node (C) [right of=X1xY1] {$C$};
 \draw[->](S) to node {}(A);
 \draw[->](S) to node {}(B);
 \draw[->](XxY) to node {}(S);
 \draw[->](XxY) to node {}(U);
 \draw[->](U) to node {}(A);
 \draw[->](U) to node {}(B);
 \draw[->](S1) to node {}(B);
 \draw[->](S1) to node {}(C);
 \draw[->](X1xY1) to node {}(S1);
 \draw[->](X1xY1) to node {}(U1);
 \draw[->](U1) to node {}(B);
 \draw[->](U1) to node {}(C);
\end{tikzpicture}
\end{center}
and now applying $;_{A,B,C}$ to these morphisms yields the span of spans

\begin{center}
\begin{tikzpicture}[->,>=stealth',node distance=2cm,auto]
 \node (SxS1) {$S\times_B S'$};
 \node (XYX1Y1) [node distance=4cm,below of=SxS1] {$(X\times_T Y)\times_B (X'\times_{T'} Y')$};
 \node (UxU1) [node distance=4cm,below of=XYX1Y1] {$U\times_B U'$};
 \node (A) [node distance=4cm,left of=XYX1Y1] {$A$};
 \node (C) [node distance=4cm, right of=XYX1Y1] {$C$};
 \draw[->] (SxS1) to node {} (A);
 \draw[->] (SxS1) to node {}(C);
 \draw[->](XYX1Y1) to node {}(SxS1);
 \draw[->](XYX1Y1) to node {}(UxU1);
 \draw[->](UxU1) to node {}(A); 
 \draw[->] (UxU1) to node {}(C);
\end{tikzpicture}
\end{center}
On the other hand, first applying $;_{A,B,C}$ to the top pair of morphisms and to the bottom pair of morphisms yields

\begin{center}
\begin{tikzpicture}[->,>=stealth',node distance=2cm,auto]
 \node (SxS1) {$S\times_B S'$};
 \node (XxX1) [below of=SxS1] {$X\times_B X'$};
 \node (TxT1) [below of=XxX1] {$T\times_B T'$};
 \node (YxY1) [below of=TxT1] {$Y\times_B Y'$};
 \node (UxU1) [below of=YxY1] {$U\times_B U'$};
 \node (A) [node distance=4cm,left of=TxT1] {$A$};
 \node (C) [node distance=4cm, right of=TxT1] {$C$};
 \draw[->] (SxS1) to node {}(A);
 \draw[->] (SxS1) to node {}(C);
 \draw[->] (TxT1) to node {}(A);
 \draw[->] (TxT1) to node {}(C);
 \draw[->] (UxU1) to node {}(A);
 \draw[->] (UxU1) to node {}(C);
 \draw[->] (XxX1) to node {}(SxS1);
 \draw[->] (XxX1) to node {}(TxT1);
 \draw[->] (YxY1) to node {}(TxT1);
 \draw[->] (YxY1) to node {}(UxU1);
\end{tikzpicture}
\end{center}
and now composing these morphisms results in

\begin{center}
\begin{tikzpicture}[->,>=stealth',node distance=2cm,auto]
 \node (SxS1) {$S\times_B S'$};
 \node (XX1YY1) [node distance=4cm,below of=SxS1] {$(X\times_B X')\times_{(T \times_B T')} (Y\times_{B} Y')$};
 \node (UxU1) [node distance=4cm,below of=XYX1Y1] {$U\times_B U'$};
 \node (A) [node distance=4cm,left of=XYX1Y1] {$A$};
 \node (C) [node distance=4cm, right of=XYX1Y1] {$C$};
 \draw[->] (SxS1) to node {} (A);
 \draw[->] (SxS1) to node {}(C);
 \draw[->](XX1YY1) to node {}(SxS1);
 \draw[->](XX1YY1) to node {}(UxU1);
 \draw[->](UxU1) to node {}(A); 
 \draw[->] (UxU1) to node {}(C);
\end{tikzpicture}
\end{center}
 For functoriality we need to see that $(X\times_T Y)\times_B (X'\times_{T'} Y')$ and $(X\times_B X')\times_{(T \times_B T')} (Y\times_{B} Y')$ are isomorphic as spans of spans. This will be accomplished similarly to how we showed the composition law in Span$(A,B)$ is associative - by showing each object in question is canonically isomorphic to the limit of a certain subdiagram. But first note that $(X\times_B X')\times_{(T\times_B T')} (Y\times_B Y')$ is exactly the same as $(X\times_B X')\times_B (Y\times_B Y')$ (since $B$ is what $T\times_B T'$ is taken over), so we will write this pullback with the latter simplified notation from now on. Consider the subdiagram

\begin{center}
\begin{tikzpicture}[->,>=stealth',node distance=2cm,auto]
 \node (X) {$X$};
 \node (T) [below of=X] {$T$};
 \node (Y) [below of=T] {$Y$};
 \node (B) [right of=T] {$B$};
 \node (T1) [right of=B] {$T'$};
 \node (X1) [above of=T1] {$X'$};
 \node (Y1) [below of=T1] {$Y'$};
 \draw[->] (X) to node {}(T);
 \draw[->](Y) to node {}(T);
 \draw[->](T) to node {}(B);
 \draw[->](T1) to node {}(B);
 \draw[->](X1) to node {}(T1);
 \draw[->](Y1) to node {}(T1);
\end{tikzpicture}
\end{center}

Letting $L$ now stand for the limit of this diagram, we have the following:

\begin{center}
\begin{tikzpicture}[->,>=stealth',node distance=3cm,auto]
 \node (X) {$X$};
 \node (T) [below of=X] {$T$};
 \node (Y) [below of=T] {$Y$};
 \node (B) [right of=T] {$B$};
 \node (T1) [right of=B] {$T'$};
 \node (X1) [above of=T1] {$X'$};
 \node (Y1) [below of=T1] {$Y'$};
 \node (XxX1) [node distance=6cm,above of=B] {$X\times_B X'$};
 \node (YxY1) [node distance=6cm, below of=B] {$Y\times_B Y'$};
 \node (L) [node distance=1.5cm,above of=B] {$L$};
 \node (XxY) [left of=T] {$X\times_T Y$};
 \node (X1xY1) [right of=T1] {$X'\times_{T'} Y'$};
 \node (XYxX1Y1) [node distance=1.5cm,above of=X,left of=X,xshift=.5cm,yshift=.5cm] {$(X\times_T Y)\times_B (X'\times_{T'} Y')$};
 \node (XX1xYY1) [node distance=1.5cm,below of=Y1,right of=Y1,xshift=-.5cm,yshift=-1cm] {$(X\times_B X')\times_{B}(Y\times_B Y')$};
 \draw[->] (X) to node {}(T);
 \draw[->](Y) to node {}(T);
 \draw[->](T) to node {}(B);
 \draw[->](T1) to node {}(B);
 \draw[->](X1) to node {}(T1);
 \draw[->](Y1) to node {}(T1);
 \draw[->,blue] (L) to node {}(X);
 \draw[->,violet!75!white!] (XxX1) to node {}(X);
 \draw[->,violet!75!white!] (XxX1) to node {}(X1);
 \draw[->,white,line width=5pt] (L) to node {}(Y);
 \draw[->,blue] (L) to node {}(Y);
\draw[->,white,line width=5pt] (L) to node {}(Y1);
 \draw[->,blue] (L) to node {}(Y1);
 \draw[->,violet!75!white!] (YxY1) to node{}(Y);
 \draw[->,violet!75!white!] (YxY1) to node{}(Y1);
 \draw[->,violet!75!white!] (XxY) to node{}(X);
\draw[->,violet!75!white!] (XxY) to node{}(Y);
 \draw[->,violet!75!white!](X1xY1) to node{}(X1);
\draw[->,violet!75!white!](X1xY1) to node{}(Y1);
\draw[->,pink!55!magenta](XYxX1Y1) to node{}(XxY);
\draw[->,white,line width=5pt] (XYxX1Y1) to node{}(X1xY1);
\draw[->,pink!55!magenta] (XYxX1Y1) to node{}(X1xY1);
\draw[->,white,line width=5pt] (L) to node {}(X1);
\draw[->,blue] (L) to node {}(X1);
\draw[->,white,line width=5pt,out=90,in=360](XX1xYY1) to node{}(XxX1);
\draw[->,out=90,in=360,pink!55!magenta](XX1xYY1) to node{}(XxX1);
\draw[->,pink!55!magenta](XX1xYY1) to node{}(YxY1);
\draw[->,white,line width=5pt,out=0,in=90] (XYxX1Y1) to node {} (L);
\draw[->,dashed,green!48!black,out=0,in=90] (XYxX1Y1) to node {$\exists !\,j$} (L);
\draw[->,white,line width=5pt,out=173,in=200] (XX1xYY1) to node {} (L);
\draw[->,dashed,green!48!black,out=173,in=200] (XX1xYY1) to node [swap]{$\exists !\,k$} (L);
\end{tikzpicture}
\end{center}

To aid the eye, arrows from the limit are in blue, arrows arising from pullbacks are in purple and pink, and unique arrows to the limit are in green. It is easy to see that $(X\times_T Y)\times_B (X' \times_{T'} Y')$ and $(X\times_B X')\times_B (Y\times_B Y')$ are both cones on the base diagram, hence the existence of arrows $j$ and $k$ uniquely making the diagram commute. Next, as in the proof of associativity in Lemma 1, we will show the existence of a unique arrow $k^{-1}$ that is an inverse of $k$ and also makes the diagram commute, so that $k^{-1}\circ j$ is an isomorphism of spans of spans (the inverse $j^{-1}$ also exists, but we will not need to use it here). Note that $L$ is a cone on the cospans determined by $X\times_B X'$ and $Y \times_B Y'$, since it is the limit of the larger diagram:

\begin{minipage}[b]{0.5\textwidth}
\centering
\begin{tikzpicture}[->,>=stealth',node distance=2cm, auto]
  \node (B) {$B$};
  \node (X) [left of=B,above of=B] {$X$};
  \node (X1) [right of=B,above of=B] {$X'$};
  \node (XxX1) [right of=X,above of=X] {$X\times_B X'$};
  \node (L) [above of=XxX1] {$L$};
  \draw[->] (X) to node {} (B);
  \draw[->] (X1) to node {} (B);
  \draw[->] (XxX1) to node {} (X);
  \draw[->] (XxX1) to node {} (X1);
 \draw[->,bend right] (L) to node {} (X);
  \draw[->,bend left] (L) to node {} (X1);
  \draw[->,dashed] (L) to node {$\exists !\,f$} (XxX1);
\end{tikzpicture}

\end{minipage}
\begin{minipage}[b]{0.5\textwidth}
\centering
\begin{tikzpicture}[->,>=stealth',node distance=2cm, auto]
  \node (B) {$B$};
  \node (Y) [left of=B,above of=B] {$Y$};
  \node (Y1) [right of=B,above of=B] {$Y'$};
  \node (YxY1) [right of=Y,above of=Y] {$Y\times_B Y'$};
  \node (L) [above of=YxY1] {$L$};
  \draw[->] (Y) to node {} (B);
  \draw[->] (Y1) to node {} (B);
  \draw[->] (YxY1) to node {} (Y);
  \draw[->] (YxY1) to node {} (Y1);
 \draw[->,bend right] (L) to node {} (Y);
  \draw[->,bend left] (L) to node {} (Y1);
  \draw[->,dashed] (L) to node {$\exists !\,g$} (YxY1);
\end{tikzpicture}

\end{minipage}

The intermediate arrows $f,g$ make $L$ a cone on the cospan determined by $(X\times_B X')\times_B (Y\times_B Y')$:

\begin{center}
\begin{tikzpicture}[->,>=stealth',node distance=2.5cm, auto]
  \node (B) {$B$};
  \node (XxX1) [left of=B,above of=B] {$X\times_B X'$};
  \node (YxY1) [right of=B,above of=B] {$Y\times_B Y'$};
  \node (XX1xYY1) [right of=XxX1,above of=XxX1] {$(X\times_B X')\times_B (Y\times_B Y')$};
  \node (L) [above of=XX1xYY1] {$L$};
  \draw[->] (XxX1) to node {} (B);
  \draw[->] (YxY1) to node {} (B);
  \draw[->]  (XX1xYY1) to node {} (XxX1);
  \draw[->]  (XX1xYY1) to node {} (YxY1);
 \draw[->,bend right] (L) to node [swap] {$f$} (XxX1);
  \draw[->,bend left] (L) to node {$g$} (YxY1);
  \draw[->,dashed] (L) to node {$\exists !\,k^{-1}$} (XX1xYY1);
\end{tikzpicture}
\end{center}

To see that $L$ really is a cone on the above cospan, note the following equality of paths of arrows:
$$L\xrightarrow{f} X\times_B X' \rightarrow B = L\rightarrow X \rightarrow B = L\rightarrow Y \rightarrow B = L \xrightarrow{g} Y\times_B Y' \rightarrow B$$
using the commuting properties of $f$ and $g$, and the fact that $L$ is a limit. Now, we omit the proof that $k^{-1}\circ k=1_{(X\times_B X')\times_B (Y\times_B Y')}$ and $k\circ k^{-1}=1_L$ since it is formally identical to the prior proof in Lemma 1 that $j$ and $j^{-1}$, or $k$ and $k^{-1}$, are inverses in that context; one just notes that the identity arrow and the composite arrow both satisfy the role of a universal arrow, as in the standard proof that any two limit objects of a given diagram are canonically isomorphic, and so those arrows are equal. Further, the proof that $k^{-1}\circ j$ is an isomorphism of spans of spans is also formally the same as before; the relevant equality of paths is given via
$$(X\times_T Y)\times_B (X'\times_{T'} Y') \rightarrow S\times_B S'=(X\times_T Y)\times_B (X'\times_{T'} Y') \xrightarrow{j} L \xrightarrow{f} X\times_B X' \rightarrow S\times_B S'$$
$$= (X\times_T Y)\times_B (X'\times_{T'} Y') \xrightarrow{k^{-1}\circ j} (X\times_B X')\times_B (Y\times_B Y')\rightarrow X\times_B X' \rightarrow S\times S'$$
and similarly 
$$(X\times_T Y)\times_B (X'\times_{T'} Y') \rightarrow U\times_B U' = (X\times_T Y)\times_B (X'\times_{T'} Y') \xrightarrow{j} L \xrightarrow{g} Y\times_B Y' \rightarrow U\times_B U'$$
$$= (X\times_T Y)\times_B (X'\times_{T'} Y') \xrightarrow{k^{-1}\circ j} (X\times_B X')\times_B(Y\times_B Y') \rightarrow Y\times_B Y' \rightarrow U\times U'.$$
We have thus established the interchange law,
$$(Y\circ X);(Y'\circ X')=(Y;Y')\circ (X;X').$$
We summarize the above results as

\begin{lemma} There is a bifunctor $;_{A,B,C}:\mbox{Span}(A,B)\times \mbox{Span}(B,C)\rightarrow \mbox{Span}(A,C)$ whose action on objects and morphisms is given by appropriate pullbacks.
\end{lemma}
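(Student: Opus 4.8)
The plan is to verify in turn that the proposed assignment (a) is well-defined, (b) lands in the right place, i.e.\ sends a morphism to a genuine span of spans, (c) preserves identities, and (d) preserves the vertical composition of $\mbox{Span}(A,B)$ and $\mbox{Span}(B,C)$ --- the last being the interchange law. Items (a)--(c) are bookkeeping; item (d) is the substance of the lemma and will consume most of the work.

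First I would fix the assignment: on objects, $(_AS_B,{}_BS'_C)\mapsto {}_A(S\times_B S')_C$ for a once-and-for-all chosen pullback of $S\to B\leftarrow S'$; on morphisms, $(_SX_T,{}_{S'}X'_{T'})\mapsto X\times_B X'$. To see $X\times_B X'$ is a span of spans from $S\times_B S'$ to $T\times_B T'$, note that the composites $X\times_B X'\to X\to S$ and $X\times_B X'\to X'\to S'$ constitute a cone on $S\to B\leftarrow S'$ (the two resulting paths to $B$ agree, using that $X$ and $X'$ are spans of spans over $B$ together with the commutativity of the pullback square for $X\times_B X'$), so there is a unique arrow $p\colon X\times_B X'\to S\times_B S'$; a symmetric argument gives $q\colon X\times_B X'\to T\times_B T'$; and the two triangles over $A$ and over $C$ commute by pasting the known commutativities of $X$ and $X'$ onto the pullback projections. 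For (a), any two pullbacks of a cospan are canonically isomorphic, and --- just as in the proof of Lemma 1 --- one checks that such a canonical isomorphism is automatically an isomorphism of spans of spans; likewise, replacing $X$ or $X'$ by an isomorphic span of spans alters $X\times_B X'$ only by an isomorphism of spans of spans, so the operation descends to isomorphism classes. For (c), applying $;_{A,B,C}$ to $(\mbox{Id}_S,\mbox{Id}_{S'})$ forms the pullback of $S\times_B S'$ against itself along two identity legs; since a pullback square one of whose legs is an identity has its pullback object canonically the tail of the other leg, the result is $\mbox{Id}_{S\times_B S'}$, as required.

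The hard part is (d). Given composable pairs $(_SX_T,{}_TY_U)$ and $(_{S'}X'_{T'},{}_{T'}Y'_{U'})$, I must show that $(X\times_T Y)\times_B(X'\times_{T'}Y')$ and $(X\times_B X')\times_{(T\times_B T')}(Y\times_B Y')$ represent the same 2-morphism from $S\times_B S'$ to $U\times_B U'$. I would first observe that the second pullback is in fact taken over $B$ (as $T\times_B T'$ maps to $B$), so it coincides with $(X\times_B X')\times_B(Y\times_B Y')$. Then I mimic the associativity argument of Lemma 1: let $L$ be the limit of the $W$-shaped base diagram consisting of $X\to T\leftarrow Y$, $X'\to T'\leftarrow Y'$, and $T\to B\leftarrow T'$. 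Each of the two iterated pullbacks is a cone on this base diagram, so there are unique comparison arrows $j$ and $k$ into $L$. Since $L$ is the limit of a strictly larger diagram it is a cone on the cospans $X\to B\leftarrow X'$ and $Y\to B\leftarrow Y'$, hence --- combining the two resulting arrows $f$ and $g$ --- on the cospan defining $(X\times_B X')\times_B(Y\times_B Y')$, which produces a unique arrow $k^{-1}\colon L\to(X\times_B X')\times_B(Y\times_B Y')$. The standard argument that the identity and a certain composite both satisfy a universal property forces $k^{-1}\circ k$ and $k\circ k^{-1}$ to be identities, so $k^{-1}$ is inverse to $k$ (and $j$ is likewise invertible, though only $j$ and $k^{-1}$ are needed here). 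Then $k^{-1}\circ j$ is an isomorphism between the two iterated pullbacks, and a short chase through $L$ shows it commutes with the legs down to $S\times_B S'$ and $U\times_B U'$ and with the further maps to $A$ and $C$; hence it is an isomorphism of spans of spans, giving $(Y\circ X);(Y'\circ X')=(Y;Y')\circ(X;X')$. Together with (a)--(c) this shows $;_{A,B,C}$ is a bifunctor. The main obstacle is organizational --- keeping straight which universal property supplies which arrow --- and the limit $L$ is precisely the device that tames the bookkeeping.
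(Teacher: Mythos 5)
Your proposal is correct and follows essentially the same route as the paper: the same definitions on objects and morphisms, the same observation that the pullback over $T\times_B T'$ reduces to a pullback over $B$, and the same device of comparing both iterated pullbacks to the limit $L$ of the base diagram via the arrows $j$, $k$, $f$, $g$, and $k^{-1}$, with $k^{-1}\circ j$ serving as the isomorphism of spans of spans. Your items (a) and (b) spell out well-definedness slightly more explicitly than the paper does, but the substance is identical.
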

\begin{proof} See above, starting at the bottom of page 10 and ending with the interchange law on this page.
\end{proof}
The next major feature of any bicategory is a natural isomorphism called the associator,
$$a_{A,B,C,D}:\;\, ;_{A,B,D}\circ(\mbox{Id}\times ;_{B,C,D}) \overset{\cong}{\Longrightarrow} \; ;_{A,C,D}\circ (;_{A,B,C}\times \mbox{Id})$$
which is required to obey a coherence law called the pentagon identity. Here Id stands for the identity functor on any given category, $\times$ stands for the product of two functors, and $\circ$ stands for composition of functors. For brevity we let $F:=\;\, ;_{A,B,D}\circ(\mbox{Id}\times ;_{B,C,D})$ and $G:=\;\, ;_{A,C,D}\circ (;_{A,B,C}\times \mbox{Id})$. Observe that $F,G$ are functors $\mbox{Span}(A,B)\times \mbox{Span}(B,C) \times \mbox{Span}(C,D)\rightarrow \mbox{Span}(A,D)$, given by
$$F(_{A}S_B,\,_{B}S'_C,\,_{C}S''_D)=S\times_B(S'\times_C S''),\;\;\;\;\;[\mbox{action on objects}]$$
$$F(_{S}X_T,\,_{S'}X'_{T'},\,_{S''}X''_{T''})=X\times_B(X'\times_C X''),\;\;\;\;\;[\mbox{action on morphisms}]$$
$$G(_{A}S_B,\,_{B}S'_C,\,_{C}S''_D)=(S\times_B S')\times_C S'',$$
$$G(_{S}X_{T},\,_{S'}X'_{T'},\,_{S''}X''_{T''})=(X\times_B X')\times_C X''.$$
In words, $F$ parenthesizes a triple (of composable spans or horizontally composable spans of spans) on the right, and $G$ parenthesizes on the left. Notice that these pullbacks are of the same form as those discussed above in proving the associativity of (vertical) composition for Lemma 1. There, we considered the diagram

\begin{center}
\begin{tikzpicture}[->,>=stealth',node distance=2cm, auto]
 \node (S) {$S$};
 \node (X) [above of=S,right of=S] {$X$};
 \node (T) [below of=X,right of=X] {$T$};
 \node (Y) [above of=T,right of=T] {$Y$};
 \node (U) [below of=Y,right of=Y] {$U$};
 \node (Z) [above of=U,right of=U] {$Z$};
 \node (V) [below of=Z,right of=Z] {$V$};
 \draw[->] (X) to node {}(S);
 \draw[->] (X) to node {}(T);
 \draw[->] (Y) to node {}(T);
 \draw[->] (Y) to node {}(U);
 \draw[->] (Z) to node {}(U);
 \draw[->] (Z) to node {}(V);
\end{tikzpicture}
\end{center}
where $S,T,U,$ and $V$ at the bottom are all objects in Span$(A,B)$, and saw that $(X\times_T Y)\times_U Z$ and $X\times_T (Y\times_U Z)$ are isomorphic as spans of spans. The part of the proof that showed the existence of the isomorphism made no reference to the nature of the objects at the bottom of the diagram, and so if we instead consider

\begin{center}
\begin{tikzpicture}[->,>=stealth',node distance=2cm, auto]
 \node (A) {$A$};
 \node (S) [above of=A,right of=A] {$S$};
 \node (B) [below of=S,right of=S] {$B$};
 \node (S1) [above of=B,right of=B] {$S'$};
 \node (C) [below of=S1,right of=S1] {$C$};
 \node (S2) [above of=C,right of=C] {$S''$};
 \node (D) [below of=S2,right of=S2] {$D$};
 \draw[->] (S) to node {}(A);
 \draw[->] (S) to node {}(B);
 \draw[->] (S1) to node {}(B);
 \draw[->] (S1) to node {}(C);
 \draw[->] (S2) to node {}(C);
 \draw[->] (S2) to node {}(D);
\end{tikzpicture}
\end{center}
where $A,B,C,$ and $D$ are simply objects of $\mathcal{C}$, we still get that $(S\times_B S')\times_C S''$ and $S\times_B (S'\times_C S'')$ are isomorphic, but only as spans. If we again use $L$ to denote the limit of the base diagram, and $j$ and $k$ for the unique maps that arise from the pullbacks as seen above, we then have

\begin{center}
\begin{tikzpicture}[->,>=stealth',node distance=2.5cm, auto]
 \node (A) {$A$};
 \node (SxS1S2) [above of=A,right of=A] {$S\times_B (S'\times_C S'')$};
 \node (L) [right of=A] {$L$};
 \node (D) [right of=L] {$D$};
 \node (SS1xS2) [below of=L] {$(S\times_B S')\times_C S''$};
 \draw[->] (SxS1S2) to node {}(A);
 \draw[->] (SxS1S2) to node {}(D);
 \draw[->] (SS1xS2) to node {}(A);
 \draw[->] (SS1xS2) to node {}(D);
\draw[->] (L) to node {$\exists !\,k^{-1}$} (SxS1S2);
\draw[->] (L) to node {$\exists !\,j^{-1}$} (SS1xS2);
\end{tikzpicture}
\end{center}
Observe that in this diagram, $L$ is a span of spans, i.e. it makes the diagram commute, due to the unique commutative properties of $j^{-1}$ and $k^{-1}$. It is also an invertible span of spans with respect to vertical composition:

\begin{minipage}[b]{0.45\textwidth}
\centering
\begin{tikzpicture}[->,>=stealth',node distance=2.5cm, auto]
\node (A) {$A$};
\node (SS1xS2) [right of=A,xshift=.5cm] {$(S\times_B S')\times_C S''$};
\node (L1) [above of=SS1xS2] {$L$};
\node (SxS1S2) [above of=L1] {$S\times_B (S'\times_C S'')$};
\node (L2) [below of=SS1xS2] {$L$};
\node (S3) [below of=L2] {$S\times_B(S'\times_C S'')$};
\node (D) [right of=SS1xS2,xshift=.5cm] {$D$};
\draw[->] (SS1xS2) to node {} (A);
\draw[->] (SS1xS2) to node {}(D);
\draw[->] (SxS1S2) to node {}(A);
\draw[->] (SxS1S2) to node {}(D);
\draw[->] (S3) to node {}(A);
\draw[->] (S3) to node {}(D);
\draw[->] (L1) to node {$k^{-1}$}(SxS1S2);
\draw[->] (L1) to node {$j^{-1}$}(SS1xS2);
\draw[->] (L2) to node {$j^{-1}$}(SS1xS2);
\draw[->] (L2) to node {$k^{-1}$}(S3);
\end{tikzpicture}

\end{minipage}
\begin{minipage}[b]{0.45\textwidth}
\centering
\begin{tikzpicture}[->,>=stealth',node distance=2.5cm, auto]
\node (1) {};
\node (SxS1S2) [above of=1] {$S\times_B (S'\times_C S'')$};
\node (L) [below left of=1] {$L$};
\node (SxS1S2b) [below right of=1] {$S\times_B (S'\times_C S'')$};
\node (2) [below right of=L] {};
\node (SxS1S2c) [below of=2] {$S\times_B (S'\times_C S'')$};
\node (A) [left of=L,xshift=.3cm] {$A$};
\node (D) [right of=SxS1S2b,xshift=-.3cm] {$D$};
\draw[->] (SxS1S2) to node {}(A);
\draw[->] (SxS1S2) to node {}(D);
\draw[->] (SxS1S2c) to node {}(A);
\draw[->] (SxS1S2c) to node {}(D);
\draw[->] (L) to node {$k^{-1}$}(SxS1S2c);
\draw[->] (L) to node [swap]{$k^{-1}$}(SxS1S2);
\draw[->] (SxS1S2b) to node [swap]{$1$}(SxS1S2c);
\draw[->] (SxS1S2b) to node {$1$}(SxS1S2);
\draw[->] (L) to node {$k^{-1}$}(SxS1S2b);
\draw[->] (L) to node [swap]{$\cong$}(SxS1S2b);

\end{tikzpicture}

\end{minipage}\\
The reverse composition is seen in the same way to be isomorphic to the identity span of spans Id$_{(S\times_B S')\times_C S''}$. Therefore, we define the associator $a_{A,B,C,D}$ componentwise as follows: the component at the object $(_{A}S_B,\,_{B}S'_C,\,_{C}S''_D)$ is the invertible span of spans: 
\begin{center}
\begin{tikzpicture}[->,>=stealth',node distance=2.5cm, auto]
 \node (A) {$A$};
 \node (SxS1S2) [above of=A,right of=A] {$S\times_B (S'\times_C S'')$};
 \node (L) [right of=A] {$L$};
 \node (D) [right of=L] {$D$};
 \node (SS1xS2) [below of=L] {$(S\times_B S')\times_C S''$};
 \draw[->] (SxS1S2) to node {}(A);
 \draw[->] (SxS1S2) to node {}(D);
 \draw[->] (SS1xS2) to node {}(A);
 \draw[->] (SS1xS2) to node {}(D);
\draw[->] (L) to node {$\exists !\,k^{-1}$} (SxS1S2);
\draw[->] (L) to node {$\exists !\,j^{-1}$} (SS1xS2);
\end{tikzpicture}
\end{center}
To prove that this associator is in fact a natural isomorphism from $F$ to $G$, it will be convenient to denote the limit used in the above diagram as $L_{S,S',S''}$ instead of just $L$, and similarly for the arrows from $L$ to the pullbacks. The naturality condition to show is then the commutativity of the following square (treating each edge of the square as a single morphism that points either right or down, not a pair of arrows):

\begin{center}
\begin{tikzpicture}[->,>=stealth',node distance=4cm, auto]
\node (SxSS) {$S\times_B (S'\times_C S'')$};
\node (XxXX) [right of=SxSS] {$X\times_B (X'\times_C X'')$};
\node (LS) [below of=SxSS] {$L_{S,S',S''}$};
\node (SSxS) [below of=LS] {$(S\times_B S')\times_C S''$};
\node (TxTT) [right of=XxXX] {$T\times_B (T'\times_C T'')$};
\node (LT) [below of=TxTT] {$L_{T,T',T''}$};
\node (XXxX) [right of=SSxS] {$(X\times_B X')\times_C X''$};
\node (TTxT) [right of=XXxX] {$(T\times_B T')\times_C T''$};
\draw[->] (XxXX) to node {}(SxSS);
\draw[->] (XxXX) to node [swap]{$q$}(TxTT);
\draw[->] (XXxX) to node [swap]{$p$}(SSxS);
\draw[->] (XXxX) to node {}(TTxT);
\draw[->] (LS) to node {}(SxSS);
\draw[->] (LS) to node {$j^{-1}_{S,S',S''}$}(SSxS);
\draw[->] (LT) to node {$k^{-1}_{T,T',T''}$}(TxTT);
\draw[->] (LT) to node {}(TTxT);
\end{tikzpicture}
\end{center}
We thus need to see that the (vertical) composites $(X\times_B X')\times_C X'' \circ L_{S,S',S''}$ and $L_{T,T',T''} \circ X\times_B (X'\times_C X'')$ are isomorphic as spans of spans. We have to consider the following two pullbacks:

\begin{minipage}[b]{0.5\textwidth}
\centering
\begin{tikzpicture}[->,>=stealth',node distance=2cm, auto]
  \node (SSxS) {$(S\times_B S')\times_C S''$};
  \node (L) [left of=SSxS,above of=SSxS] {$L_{S,S',S''}$};
  \node (XXxX) [right of=SSxS,above of=SSxS] {$(X\times_B X')\times_C X''$};
  \node (XXxX1) [right of=L,above of=L] {$(X\times_B X')\times_C X''$};
  \draw[->] (L) to node [swap]{$j^{-1}_{S,S',S''}$} (SSxS);
  \draw[->] (XXxX) to node {$p$} (SSxS);
  \draw[->] (XXxX1) to node [swap]{$j_{S,S',S''}\circ p$} (L);
  \draw[->] (XXxX1) to node {$1$} (XXxX);
\end{tikzpicture}

\end{minipage}
\begin{minipage}[b]{0.5\textwidth}
\centering
\begin{tikzpicture}[->,>=stealth',node distance=2cm, auto]
  \node (TxTT) {$T\times_B (T'\times_C T'')$};
  \node (L) [right of=TxTT,above of=TxTT] {$L_{T,T',T''}$};
  \node (XxXX) [left of=TxTT,above of=TxTT] {$X\times_B (X'\times_C X'')$};
  \node (XxXX1) [left of=L,above of=L] {$X\times_B (X'\times_C X'')$};
  \draw[->] (L) to node {$k^{-1}_{T,T',T''}$} (TxTT);
  \draw[->] (XxXX) to node [swap]{$q$} (TxTT);
  \draw[->] (XxXX1) to node {$k_{T,T',T''}\circ q$} (L);
  \draw[->] (XxXX1) to node [swap]{$1$} (XxXX);
\end{tikzpicture}

\end{minipage}
Thus we need to show the existence of an isomorphism of spans of spans for the following diagram:
\begin{center}
\begin{tikzpicture}[->,>=stealth',node distance=4cm, auto]
\node (1) {};
\node (SxS1S2) [above of=1] {$S\times_B (S'\times_C S'')$};
\node (XXxX) [below left of=1] {$(X\times_B X')\times_C X''$};
\node (XxXX) [below right of=1] {$X\times_B (X'\times_C X'')$};
\node (2) [below right of=XXxX] {};
\node (TTxT) [below of=2] {$(T\times_B T')\times_C T''$};
\node (A) [left of=XXxX,xshift=.3cm] {$A$};
\node (D) [right of=XxXX,xshift=-.3cm] {$D$};
\draw[->,bend right] (SxS1S2) to node {}(A);
\draw[->] (SxS1S2) to node {}(D);
\draw[->] (TTxT) to node {}(A);
\draw[->,bend right] (TTxT) to node{}(D);
\draw[->] (XXxX) to node {$k^{-1}_{S,S',S''}\circ j_{S,S',S''}\circ p$}(SxS1S2);
\draw[->] (XXxX) to node [swap]{$q$}(TTxT);
\draw[->] (XxXX) to node {$j^{-1}_{T,T',T''}\circ k_{T,T',T''}\circ q$}(TTxT);
\draw[->] (XxXX) to node [swap]{$p$}(SxS1S2);
\draw[->,dashed] (XXxX) to node {$\cong$}(XxXX);

\end{tikzpicture}
\end{center}
Note that the arrows $p$ and $q$ always stand for uniquely commutative arrows from a horizontal composite of spans of spans to the two spans it goes between; the two instances of $p$ and $q$ above do not stand for exactly the same arrows, but differ only in their domain and codomain while having the same meaning. We will construct the desired isomorphism of spans of spans similarly to how was done above in other situations. We start with the base subdiagram determined by just $X,X',X'',S,S',$ and $S''$ (drawn with black arrows in the larger diagram below), and construct the relevant pullbacks and limits:

\begin{center}
\begin{tikzpicture}[->,>=stealth',node distance=3cm, auto]
 \node (1) {};
 \node (S) [above of=1] {$S$};
 \node (X) [above of=S] {$X$};
 \node (B) [right of=1] {$B$};
 \node (XX1) [above of=B] {};
 \node (SS1) [above of=XX1] {};
 \node (2) [right of=B] {};
 \node (S1) [above of=2] {$S'$};
 \node (X1) [above of=S1] {$X'$};
 \node (C) [right of=2] {$C$};
 \node (X1X2) [above of=C] {};
 \node (S1S2) [above of=X1X2] {};
 \node (3) [right of=C] {};
 \node (S2) [above of=3] {$S''$};
 \node (X2) [above of=S2] {$X''$};
 \node (SS1) [right of=X] {$S\times_B S'$};
 \node (S1S2) [right of=X1] {$S'\times_C S''$};
\node (XX1) [above of=SS1] {$X\times_B X'$};
\node (X1X2) [above of=S1S2] {$X'\times_C X''$};
\node (LX) [above of=XX1,right of=XX1] {$L_{X,X',X''}$};
\node (LS) [below of=B,right of=B] {$L_{S,S',S''}$};
\node (XXxX) [above of=XX1,left of=XX1] {$(X\times_B X')\times_C X''$};
\node (XxXX) [above of=X1X2,right of=X1X2] {$X\times_B (X'\times_C X'')$};
\node (SSxS) [below of=B,left of=B] {$(S\times_B S')\times_C S''$};
\node (SxSS) [below of=C,right of=C] {$S\times_B (S'\times_C S'')$};
 \draw[->] (X) to node {}(S);
 \draw[->] (S) to node {}(B);
 \draw[->] (S1) to node {}(B);
\draw[->] (X1) to node {}(S1);
\draw[->] (S1) to node {}(C);
\draw[->](S2) to node {}(C);
\draw[->](X2) to node {}(S2);
\draw[->,violet!75!white!] (SS1) to node {}(S);
\draw[->,violet!75!white!] (SS1) to node {}(S1);
\draw[->,violet!75!white!] (S1S2) to node {}(S1);
\draw[->,violet!75!white!] (S1S2) to node {}(S2);
\draw[->,violet!75!white!] (XX1) to node {}(X);
\draw[->,violet!75!white!] (XX1) to node {}(X1);
\draw[->,violet!75!white!] (X1X2) to node {}(X1);
\draw[->,violet!75!white!] (X1X2) to node {}(X2);
\draw[->,orange!80!black] (XX1) to node [swap]{}(SS1);
\draw[->,orange!80!black] (X1X2) to node [swap]{}(S1S2);
\draw[->,pink!55!magenta] (XXxX) to node {}(XX1);
\draw[->,pink!55!magenta] (XxXX) to node {}(X1X2);

\draw[->,bend right,orange!80!black] (XXxX) to node [swap]{$p$}(SSxS);
\draw[->,bend left,orange!80!black] (XxXX) to node {$p$}(SxSS);
\draw[->,dashed,out=30,in=150,green!48!black] (XXxX) to node {$\exists !\,\alpha$} (LX);
\draw[->,dashed,out=30,in=150,green!48!black] (XXxX) to node [swap]{$\cong$} (LX);
\draw[->,dashed,out=150,in=30,green!48!black] (XxXX) to node [swap]{$\exists !\, \beta$} (LX);
\draw[->,dashed,out=150,in=30,green!48!black] (XxXX) to node {$\cong$} (LX);
\draw[->,dashed,out=330,in=210,green!48!black] (SSxS) to node [swap]{$\exists !\,j_{S,S',S''}$} (LS);
\draw[->,dashed,out=330,in=210,green!48!black] (SSxS) to node {$\cong$} (LS);
\draw[->,dashed,out=210,in=330,green!48!black] (SxSS) to node {$\exists !\,k_{S,S',S''}$} (LS);
\draw[->,dashed,out=210,in=330,green!48!black] (SxSS) to node [swap]{$\cong$} (LS);

\draw[->,white,line width=5pt] (XXxX) to node {}(X2);
\draw[->,pink!55!magenta] (XXxX) to node {}(X2);
\draw[->,white,line width=5pt] (XxXX) to node {}(X);
\draw[->,pink!55!magenta] (XxXX) to node {}(X);
\draw[->,white,line width=5pt] (SSxS) to node {}(SS1);
\draw[->,pink!55!magenta] (SSxS) to node {}(SS1);
\draw[->,white,line width=5pt] (SSxS) to node {}(S2);
\draw[->,pink!55!magenta] (SSxS) to node {}(S2);
\draw[->,white,line width=5pt] (SxSS) to node {}(S);
\draw[->,pink!55!magenta] (SxSS) to node {}(S);
\draw[->,white,line width=5pt] (SxSS) to node{}(S1S2);
\draw[->,pink!55!magenta] (SxSS) to node{}(S1S2);
\draw[->,out=-65,in=180,white,line width=5pt] (LX) to node {$\exists !$} (SxSS);
\draw[->,dashed,out=-65,in=180,blue!58!pink!] (LX) to node {$\exists !h$} (SxSS);
\draw[->,bend right,white,line width=5pt] (LX) to node {}(X);
\draw[->,bend right,blue] (LX) to node {}(X);
\draw[->,white,line width=5pt] (LX) to node {}(X1);
\draw[->,blue] (LX) to node {}(X1);
\draw[->,bend left,white,line width=5pt] (LX) to node {}(X2);
\draw[->,bend left,blue] (LX) to node {}(X2);
\draw[->,bend left,white,line width=5pt] (LS) to node {}(S);
\draw[->,bend left,blue] (LS) to node {}(S);
\draw[->,white,line width=5pt] (LS) to node {}(S1);
\draw[->,blue] (LS) to node {}(S1);
\draw[->,bend right,white,line width=5pt] (LS) to node {}(S2);
\draw[->,bend right,blue] (LS) to node {}(S2);
\end{tikzpicture}
\end{center}
Since $(X\times_B X')\times_C X''$ and $X\times_B(X'\times_C X'')$ are seen to be cones over the base diagram, there are unique arrows $\alpha,\beta$ from these iterated pullbacks to the limit of the whole base diagram making their triangles commute. Also, that limit $L_{X,X',X''}$ is seen to be a cone over the cospan determined by $S'\times_C S''$, and hence also over the cospan determined by $S\times_B (S'\times_C S'')$, providing a unique arrow $h$ from $L_{X,X',X''}$ to the latter pullback that makes its relevant triangles commute. By an argument that is formally identical to the ones seen above, the arrows $\alpha,\beta$ are in fact isomorphisms, whose inverses $\alpha^{-1},\beta^{-1}$ also make the diagram commute. In exactly the same fashion, we have isomorphisms $j,k$ of the smaller limit $L_{S,S',S''}$ with the spans $(S\times_B S')\times_C S''$ and $S\times_B (S'\times_C S'')$. We will now prove that $\beta^{-1}\circ \alpha$ is the isomorphism of spans of spans we seek. We show the required equality of paths for the top half of the pertinent diagram; the proof for the bottom half is exactly symmetrical using the equivalent diagram determined by $X,X',X'',T,T',$ and $T''$ instead. We have

$$(X\times_B X')\times_C X'' \xrightarrow{k^{-1}\circ j \circ p} S\times_B (S'\times_C S'') = (X\times_B X')\times_C X'' \xrightarrow{\alpha} L_{X,X',X''} \xrightarrow{h} S\times_B (S'\times_C S'')$$
$$= (X\times_B X')\times_C X'' \xrightarrow{\beta^{-1}\circ \alpha} X\times_B (X'\times_C X'') \xrightarrow{p} S\times_B (S'\times_C S'')$$
as required. We have just completed
\begin{lemma} The associator $a_{A,B,C,D}$ as defined above is a natural isomorphism.
\end{lemma}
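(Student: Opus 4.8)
The plan is to verify directly the two conditions in the definition of a natural isomorphism $F\Rightarrow G$: that every component is invertible in the target hom-category $\mbox{Span}(A,D)$, and that the naturality squares commute. The first condition is, in effect, already established: the component at $(_{A}S_B,\,_{B}S'_C,\,_{C}S''_D)$ is the span of spans with apex $L_{S,S',S''}$ and legs $k^{-1}_{S,S',S''}$ and $j^{-1}_{S,S',S''}$, and the two diagrams displayed just above show that its vertical composite with the reverse span of spans, taken in either order, is isomorphic to the appropriate identity span of spans. So all of the remaining work is in the naturality square.

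For naturality I would fix a morphism $(_{S}X_{T},\,_{S'}X'_{T'},\,_{S''}X''_{T''})$ of $\mbox{Span}(A,B)\times\mbox{Span}(B,C)\times\mbox{Span}(C,D)$ and show that the two ways around the naturality square --- the component of $a_{A,B,C,D}$ at the source followed by $G(X,X',X'')$, versus $F(X,X',X'')$ followed by the component at the target --- give the same morphism from $S\times_B(S'\times_C S'')$ to $(T\times_B T')\times_C T''$. The first step is to simplify each of these vertical composites. Each is a pullback one of whose legs is an isomorphism (namely $j^{-1}_{S,S',S''}$ in the first and $k^{-1}_{T,T',T''}$ in the second), so by the observation used in Lemma 1 --- a pullback along an isomorphism, in particular an identity, is canonically isomorphic to the remaining factor --- the first composite is canonically isomorphic \emph{as a span of spans} to $(X\times_B X')\times_C X''$, with apex map $j_{S,S',S''}\circ p$ into $L_{S,S',S''}$, and the second to $X\times_B(X'\times_C X'')$, with apex map $k_{T,T',T''}\circ q$ into $L_{T,T',T''}$. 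Thus naturality reduces to exhibiting an isomorphism of spans of spans from $(X\times_B X')\times_C X''$ to $X\times_B(X'\times_C X'')$ compatible with the legs $k^{-1}_{S,S',S''}\circ j_{S,S',S''}\circ p$ and $q$ on one side and $p$ and $j^{-1}_{T,T',T''}\circ k_{T,T',T''}\circ q$ on the other.

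To produce that isomorphism I would reuse the limit technique of Lemma 1. Let $L_{X,X',X''}$ be the limit of the base diagram determined by $X,X',X'',S,S',S''$; both iterated pullbacks $(X\times_B X')\times_C X''$ and $X\times_B(X'\times_C X'')$ are cones on this base, so there are unique comparison arrows $\alpha$ and $\beta$ into $L_{X,X',X''}$, and by the by-now-standard argument (the identity arrow and a suitable composite both satisfy one of these universal properties, hence coincide) $\alpha$ and $\beta$ are isomorphisms whose inverses again commute with all the structure maps; then $\beta^{-1}\circ\alpha$ is the candidate. What remains is to check two equalities of composite arrows, one into $S\times_B(S'\times_C S'')$ and one into $(T\times_B T')\times_C T''$. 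Each factors through the unique comparison arrow $h$ from $L_{X,X',X''}$ down to the threefold pullback built over the $S$'s (respectively over the $T$'s) and then through the isomorphisms $j_{S,S',S''},k_{S,S',S''}$ (respectively their $T$-analogues); the first equality is the one written out above, and the second is obtained verbatim by replacing $S,S',S''$ with $T,T',T''$ throughout. Since this succeeds for every morphism $(X,X',X'')$, the naturality square commutes, and together with the componentwise invertibility this shows $a_{A,B,C,D}$ is a natural isomorphism.

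The step I expect to be the main obstacle is not any single computation but keeping this large diagram organized: it involves the left- and right-parenthesized threefold pullbacks, their images under $F$ and $G$, the large limit $L_{X,X',X''}$, the two smaller limits $L_{S,S',S''}$ and $L_{T,T',T''}$, and all the projections and comparison maps between them, with the names $p,q$ (and $j,k,\alpha,\beta$) reused in several roles. Concretely, the care goes into (i) pinning down the correct base diagram whose limit is $L_{X,X',X''}$, so that $\alpha$ and $\beta$ genuinely are isomorphisms and nothing spurious is imposed, and (ii) verifying that the \emph{single} arrow $\beta^{-1}\circ\alpha$ is simultaneously compatible with the leg into $S\times_B(S'\times_C S'')$ and with the leg into $(T\times_B T')\times_C T''$ --- which is exactly where the auxiliary arrow $h$ and the transport across $j,k$ (and their $T$-versions) do the work.
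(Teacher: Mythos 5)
Your proposal is correct and follows essentially the same route as the paper: invertibility of the components via the pullback-along-an-isomorphism observation, reduction of the naturality square to an isomorphism of spans of spans between $(X\times_B X')\times_C X''$ and $X\times_B(X'\times_C X'')$ with the transported legs, and construction of that isomorphism as $\beta^{-1}\circ\alpha$ through the limit $L_{X,X',X''}$ using the auxiliary arrow $h$. The point you flag as the main obstacle --- checking that the one arrow $\beta^{-1}\circ\alpha$ works for both the $S$-side and the $T$-side legs --- is exactly the point the paper handles by running the symmetric argument over the base diagram built from $T,T',T''$.
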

\begin{proof} Done.
\end{proof}
\begin{lemma} The associator $a_{A,B,C,D}$ satisfies the pentagon identity.
\end{lemma}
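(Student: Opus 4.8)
The plan is to apply, one final time, the device already used in the three preceding lemmas: realize every object that occurs anywhere in the pentagon as canonically isomorphic to a single limit, and then read off the two composites around the pentagon as one and the same canonical isomorphism of spans of spans. Fix four composable spans $_{A}S_{B}$, $_{B}S'_{C}$, $_{C}S''_{D}$, $_{D}S'''_{E}$. The five bracketings of $S\times_B S'\times_C S''\times_D S'''$ are the five vertices of the pentagon, and its five edges are associator $2$-cells, two of them obtained by horizontally composing (whiskering) an associator component with an identity span. The pentagon identity asserts that the composite of $2$-cells running along one side of the pentagon, between the fully left-bracketed span $((S\times_B S')\times_C S'')\times_D S'''$ and the fully right-bracketed span $S\times_B(S'\times_C(S''\times_D S'''))$, agrees with the composite running along the other side. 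Since a $2$-cell of $\mbox{Span}_2(\mathcal{C})$ is an isomorphism class of spans of spans, it suffices to exhibit an isomorphism of spans of spans between representatives of the two composites.

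First I would form the base diagram $\mathcal{D}$ consisting of the zig-zag $S\to B\leftarrow S'\to C\leftarrow S''\to D\leftarrow S'''$ together with the structure legs of the four spans to $A$ and to $E$, and let $L$ denote its limit; this exists because $\mathcal{C}$ has pullbacks and a terminal object, hence all finite limits, and indeed $L$ may be built by iterated pullbacks exactly as in the constructions above. The key claim is that \emph{every} object appearing in the pentagon is canonically isomorphic to $L$, via a unique arrow compatible with all the legs down to $S,S',S'',S'''$ (and hence with the legs to $A$ and $E$): this includes each of the five bracketed iterated pullbacks at the vertices, the apex $L_{-,-,-}$ of each associator component, the apex of each whiskered associator, and the apex of each of the iterated pullbacks computing the two pentagon composites. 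In every instance the argument is formally identical to ones already carried out: the object in question is a cone over $\mathcal{D}$ (any two parallel chains of arrows from it down to $B$, $C$, or $D$ coincide, by the defining property of the pullbacks out of which it is assembled), yielding a unique arrow to $L$; conversely $L$ is a cone over whichever cospan is relevant, being the limit of a diagram containing that cospan, yielding a unique arrow back; and the by-now-standard ``both the identity and the composite fill the universal slot'' argument from Lemma 1 shows these two arrows are mutually inverse. For the two whiskered edges one checks in the same idiom that the horizontal composite $;_{A,B,E}(\mbox{Id}_S,a_{B,C,D,E})$, with apex $S\times_B L_{S',S'',S''''}$, is again a cone over $\mathcal{D}$ and hence canonically isomorphic to $L$, the point being that this pullback is precisely the limit of $S\to B\leftarrow L_{S',S'',S'''}$ and limits compose; symmetrically on the other side.

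Granting the claim, each edge of the pentagon is, as a span of spans, canonically isomorphic to the span of spans whose apex is $L$ and whose two legs are the canonical arrows from $L$ to its source and target bracketings. The vertical composite of two consecutive edges, being a pullback of spans of spans over their common bracketing, is once more a cone over $\mathcal{D}$ and so is again canonically isomorphic to $L$ equipped with the appropriate outer legs. Consequently both composites around the pentagon are equal, as $2$-cells, to the span of spans with apex $L$ and legs the canonical arrows to $((S\times_B S')\times_C S'')\times_D S'''$ and to $S\times_B(S'\times_C(S''\times_D S'''))$; in particular they are equal to each other, which is the pentagon identity.

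The genuine work, and the place where I expect the bookkeeping to be heaviest, is in verifying the key claim: there are four intermediate iterated-pullback vertices and several auxiliary limits ($L_{S,S',S''}$, $L_{S',S'',S'''}$, $L_{S\times_B S',S'',S'''}$, and so on), and for each one must confirm it really is a cone over $\mathcal{D}$ (respectively over the cospan in play) so that the universal arrows exist, and that all the resulting canonical isomorphisms are compatible with every leg in sight, including after whiskering by an identity $1$-cell. Each such verification is a routine repetition of a calculation already performed above; no new idea enters, only the careful organization of the diagram.
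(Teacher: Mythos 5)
Your proposal is correct and takes essentially the same approach as the paper: both arguments introduce the limit $L_{S,S',S'',S'''}$ of the base zig-zag $S\to B\leftarrow S'\to C\leftarrow S''\to D\leftarrow S'''$, show that the two-sided and three-sided composites around the pentagon are each (as spans of spans) canonically isomorphic to that limit by the cone/universal-arrow argument of Lemma 1, and conclude by transitivity of canonical isomorphism. The paper is slightly more explicit in naming the two composite apexes, namely $L_{S,S',S''\times_D S'''}\times_C L_{S\times_B S',S'',S'''}$ and $((S\times_B L_{S',S'',S'''})\times_B L_{S,S'\times_C S'',S'''})\times_D(L_{S,S',S''}\times_D S''')$, but it defers the detailed verifications to "the usual argument" just as you do.
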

\begin{proof}We need to check, for any object $(S,S',S'',S''')$ of $\mbox{Span}(A,B)\times \,\mbox{Span}(B,C)\times \,\mbox{Span}(C,D) \times \,\mbox{Span}(D,E)$, commutativity of the Mac Lane pentagon, which in this case is

\begin{center}
\begin{tikzpicture}[->,>=stealth',node distance=4cm, auto]
 \node (A) {$S\times_B (S'\times_C (S''\times_D S'''))$};
 \node (B) [right of=A,xshift=2.5cm] {$S\times_B ((S'\times_C S'')\times_D S''')$};
 \node (C) [below of=A] {$(S\times_B S')\times_C (S''\times_D S''')$};
 \node (D) [below of=B] {$(S\times_B (S'\times_C S''))\times_D S'''$};
 \node (E) [below of=C,xshift=3.25cm] {$((S\times_B S')\times_C S'')\times_D S'''$};
 \draw[double equal sign distance, -implies] (A) to node {$\mbox{Id}_S \, ;\, a_{S',S'',S'''}$}(B);
 \draw[double equal sign distance,-implies] (A) to node [swap]{$a_{S,S',S''\times_D S'''}$}(C);
 \draw[double equal sign distance,-implies] (C) to node [swap]{$a_{S\times_B S',S'',S'''}$}(E);
 \draw[double equal sign distance,-implies] (B) to node {$a_{S,S'\times_C S'',S'''}$}(D);
 \draw[double equal sign distance,-implies] (D) to node {$a_{S,S',S''}\, ; \,\mbox{Id}_{S'''}$}(E);

\end{tikzpicture}
\end{center}
In terms of vertical and horizontal compositions, the commutativity of this diagram implies the existence of an isomorphism of spans of spans for the following diagram, from the vertical composite on the left to that on the right:
\begin{center}
\begin{tikzpicture}[->,>=stealth',node distance=4cm, auto]
\node (SxS123) {$S\times_B (S'\times_C (S''\times_D S'''))$};
\node (L1) [below of=SxS123, left of=SxS123] {$L_{S,S',S''\times_D S'''}$};
\node (SS) [below of=L1] {$(S\times_B S')\times_C (S''\times_D S''')$};
\node (L2) [below of=SS] {$L_{S\times_B S',S'',S'''}$};
\node (S12xS3) [below of=L2, right of=L2] {$((S\times_B S')\times_C S'')\times_D S'''$};
\node (SL3) [below of=SxS123,right of=SxS123,yshift=1.33cm] {$S\times_B L_{S',S'',S'''}$};
\node (SS1) [below of=SL3,yshift=1.33cm] {$S\times_B ((S'\times_C S'')\times_D S''')$};
\node (L4) [below of=SS1,yshift=1.33cm] {$L_{S,S'\times_C S'',S'''}$};
\node (SS2) [below of=L4,yshift=1.33cm] {$(S\times_B (S'\times_C S''))\times_D S'''$};
\node (L5) [below of=SS2,yshift=1.33cm] {$L_{S,S',S''}\times_D S'''$};
\node (A) [left of=SS] {$A$};
\node (E) [right of=L4] {$E$};
\draw[->,out=180,in=89] (SxS123) to node {}(A);
\draw[->,out=0,in=91] (SxS123) to node {}(E);
\draw[->,out=180,in=271] (S12xS3) to node {}(A);
\draw[->,out=0,in=269] (S12xS3) to node {}(E);
\draw[->] (L1) to node {}(SxS123);
\draw[->] (L1) to node {}(SS);
\draw[->] (L2) to node {}(SS);
\draw[->] (L2) to node {}(S12xS3);
\draw[->] (SL3) to node {}(SxS123);
\draw[->] (SL3) to node {}(SS1);
\draw[->] (L4) to node {}(SS1);
\draw[->] (L4) to node {}(SS2);
\draw[->] (L5) to node {}(SS2);
\draw[->] (L5) to node {}(S12xS3);
\end{tikzpicture}
\end{center}
We will show that both (vertical composite) paths from $S \times_B (S'\times_C (S'' \times_D S'''))$ to $((S \times_B S')\times_C S'')\times_D S'''$ are equal to the invertible span of spans $L_{S,S',S'',S'''}$, which we define to be the limit of the diagram

\begin{center}
\begin{tikzpicture}[->,>=stealth',node distance=2cm, auto]
 \node (S) {$S$};
 \node (B) [below of=S,right of=S] {$B$};
 \node (S1) [above of=B,right of=B] {$S'$};
 \node (C) [below of=S1,right of=S1] {$C$};
 \node (S2) [above of=C,right of=C] {$S''$};
 \node (D) [below of=S2,right of=S2] {$D$};
 \node (S3) [above of=D,right of=D] {$S'''$};
 \draw[->] (S) to node {} (B);
 \draw[->] (S1) to node {}(B);
 \draw[->] (S1) to node {}(C);
\draw[->] (S2) to node {}(C);
 \draw[->] (S2) to node {}(D);
\draw[->] (S3) to node {}(D);

\end{tikzpicture}
\end{center}

As the diagrams involved in this argument are difficult to draw in a clean way, we will omit them. First consider the two sided part of the pentagon, i.e. the pullback $L_{S,S',S''\times_D S'''}\times_C L_{S\times S',S'',S'''}$ (this pullback is originally over $(S\times_B S')\times_C (S''\times_D S''')$, but as the latter pullback is done over $C$, the original pullback may also be reduced to be done over $C$, as seen earlier in this paper). One easily sees that $L_{S,S',S''\times_D S'''}\times_C L_{S\times_B S',S'',S'''}$ is a cone over the base diagram determined by $S,S',S'',S'''$, so there is a uniquely commuting arrow $\alpha$ from $L_{S,S',S''\times_D S'''}\times_C L_{S\times_B S',S'',S'''}$ to $L_{S,S',S'',S'''}$. Also, there are uniquely commuting arrows $f$ and $g$ from $L_{S,S',S'',S'''}$ to $S\times_B S'$ and $S''\times_D S'''$, respectively, by the universal property of pullbacks. The existence of $f$ and $g$ then provides further unique arrows $h,i$ from $L_{S,S',S'',S'''}$ to $L_{S,S',S''\times_D S'''}$ and $L_{S\times_B S',S'',S'''}$, respectively, by the universal property of limits. Finally, $h$ and $i$ together provide a unique arrow from $L_{S,S',S'',S'''}$ to $L_{S,S',S''\times_D S'''}\times_C L_{S\times_B S',S'',S'''}$; one easily sees by the usual argument that this last arrow is an inverse for $\alpha$, so that $\alpha$ is a canonical isomorphism.

For the three sided part of the pentagon, we have to deal with an iterated pullback, which may be parenthesized two different ways; we have shown earlier that both parenthesizations are equal as spans of spans (associativity of vertical composition), so we will arbitrarily choose one of them to work with. Consider $$((S\times_B L_{S',S'',S'''})\times_B L_{S,S'\times_B S'',S'''})\times_D (L_{S,S',S''}\times_D S''').$$
For brevity, call this pullback $P$. It is clear that this is a cone on the base diagram determined by $S,S',S'',S'''$ if one follows the two arrows to the `factors' of that pullback, which are themselves pullbacks involving limits, down to the bottom. This gives a unique arrow $\beta$ from $P$ to $L_{S,S',S'',S'''}$. Next note that there are uniquely commuting arrows from $L_{S,S',S'',S'''}$ to $L_{S,S'\times_C S'',S'''}$, to $S\times_B L_{S',S'',S'''}$, and to $L_{S,S',S''}\times_D S'''$, by the universal properties of limits and pullbacks. Next, the unique arrows from $L_{S,S',S'',S'''}$ to $S\times_B L_{S',S'',S'''}$ and $L_{S,S'\times_C S'',S'''}$ give a uniquely commuting arrow to the pullback $(S\times_B L_{S',S'',S'''})\times_B L_{S,S'\times_C S''}$; that arrow along with the unique arrow to $L_{S,S',S''}\times_D S'''$ provide a uniquely commuting arrow from $L_{S,S',S'',S'''}$ to $P$. This last arrow is seen to be an inverse of $\beta$, again by the usual argument used throughout this paper. Therefore, $P$ and $L_{S,S',S'',S'''}$ are canonically isomorphic. This gives us the pentagon identity, because the property of being canonically isomorphic is transitive, so that the two sided part equals the three sided part of the pentagon.
\end{proof}
\section{Unitors}
The last thing required to see $\mbox{Span}_2(\mathcal{C})$ is a bicategory is to define a pair of right and left `unitor' natural isomorphisms,
$$r_{A,B}:\, ;_{A,A,B} \circ (I_A \times \mbox{Id}) \Longrightarrow \mbox{Id},$$
$$l_{A,B}:\, ;_{A,B,B} \circ (\mbox{Id}\times I_B) \Longrightarrow \mbox{Id},$$
satisfying a coherence law called the triangle identity. Here the functor $;_{A,A,B}\circ (I_A \times \mbox{Id})$ is understood to be going from $\mbox{Span}(A,B)$ to itself, sending an object $_{A} S_B$ to the horizontal composite $_{A} A_A \, ; \, _{A} S_B$, and sending a morphism $_{S}X_T$ to $\mbox{Id}_{_{A}A_A}\,;\,_{S}X_T$; similarly for $;_{A,B,B} \circ (\mbox{Id}\times I_B)$. The next diagram shows the canonical isomorphism $_{A} A_A \times_A \,_{A}S_B \cong \, _{A}S_B$:

\begin{center}
\begin{tikzpicture}[->,>=stealth',node distance=2cm, auto]
 \node (A) {$A$};
 \node (A1) [above of=A,right of=A] {$A$};
 \node (A2) [below of=A1,right of=A1] {$A$}; 
 \node (S) [above of=A2,right of=A2] {$S$};
 \node (B) [below of=S,right of=S] {$B$};
 \node (AS) [above of=A1,right of=A1] {$A\times_A S\cong S$};
 \draw[->] (A1) to node [swap]{$1$} (A);
 \draw[->] (A1) to node {$1$} (A2);
 \draw[->] (S) to node [swap]{$f$}(A2);
\draw[->] (S) to node {}(B);
 \draw[->] (AS) to node [swap]{$f$}(A1);
\draw[->] (AS) to node {$1$}(S);
\end{tikzpicture}
\end{center}
Using this, we also obtain a canonical isomorphism $\mbox{Id}_{_{A}A_A}\,;\, _{S}X_T\cong \,_{S}X_T$, as is easily checked. With this in mind, we define the component $r_{S}$ to be the span of spans:

\begin{center}
\begin{tikzpicture}[->,>=stealth',node distance=2.5cm, auto]
 \node (A) {$A$};
 \node (AS) [above of=A,right of=A] {$A\times_A S$};
 \node (B) [below of=AS,right of=AS] {$B$}; 
 \node (S) [right of=A] {$S$};
 \node (S1) [below of=A,right of=A] {$S$};
 \draw[->] (AS) to node {} (A);
 \draw[->] (AS) to node {} (B);
 \draw[->] (S) to node {$\exists !\,\cong$}(AS);
\draw[->] (S) to node {$1$}(S1);
 \draw[->] (S1) to node {}(A);
\draw[->] (S1) to node {}(B);
\end{tikzpicture}
\end{center}

Similarly, $l_S$ is defined to be

\begin{center}
\begin{tikzpicture}[->,>=stealth',node distance=2.5cm, auto]
 \node (A) {$A$};
 \node (SB) [above of=A,right of=A] {$S\times_B B$};
 \node (B) [below of=SB,right of=SB] {$B$}; 
 \node (S) [right of=A] {$S$};
 \node (S1) [below of=A,right of=A] {$S$};
 \draw[->] (SB) to node {} (A);
 \draw[->] (SB) to node {} (B);
 \draw[->] (S) to node {$\exists !\,\cong$}(SB);
\draw[->] (S) to node {$1$}(S1);
 \draw[->] (S1) to node {}(A);
\draw[->] (S1) to node {}(B);
\end{tikzpicture}
\end{center}

These spans of spans are easily seen to be invertible. We have to check the naturality square:

\begin{center}
\begin{tikzpicture}[->,>=stealth',node distance=2.5cm, auto]
\node (AS) {$A\times_A S$};
\node (AX) [right of=AS] {$A\times_S X$};
\node (AT) [right of=AX] {$A\times_A T$};
\node (S) [below of=AS] {$S$};
\node (S1) [below of=S] {$S$};
\node (X) [right of=S1] {$X$};
\node (T1) [right of=X] {$T$};
\node (T) [above of=T1] {$T$};
\draw[->] (AX) to node {}(AS);
\draw[->] (AX) to node {}(AT);
\draw[->] (S) to node {$\exists !\, \cong$}(AS);
\draw[->] (S) to node [swap]{$1$} (S1);
\draw[->] (T) to node [swap]{$\exists !\, \cong$} (AT);
\draw[->] (T) to node {$1$} (T1);
\draw[->] (X) to node {} (S1);
\draw[->] (X) to node {}(T1);

\end{tikzpicture}
\end{center}
The vertical composite obtained from going along the left and then bottom of this square is the pullback over $S$ of the interior of:

\begin{center}
\begin{tikzpicture}[->,>=stealth',node distance=1.5cm, auto]
 \node (A) {$A$};
 \node (S) [right of=A,xshift=2cm] {$S$};
 \node (B) [right of=S,xshift=2cm] {$B$};
 \node (S1) [above of=S] {$S$};
 \node (AS) [above of=S1] {$A\times_A S$};
 \node (X) [below of=S] {$X$};
 \node (T) [below of=X] {$T$};
\draw[->,out=203,in=49] (AS) to node {}(A);
\draw[->] (AS) to node {}(B);
\draw[->] (S1) to node {$\exists !\,\cong$}(AS);
\draw[->] (S1) to node {$1$}(S);
\draw[->] (X) to node {}(S);
\draw[->] (X) to node {}(T);
\draw[->] (T) to node {}(A);
\draw[->] (T) to node {}(B);
\end{tikzpicture}
\end{center}
Taking the pullback (noting that one leg is an identity arrow), this becomes 

\begin{center}
\begin{tikzpicture}[->,>=stealth',node distance=2.25cm, auto]
\node (A) {$A$};
\node (X) [right of=A] {$X$};
\node (B) [right of=X] {$B$};
\node (AS) [above of=X] {$A\times_A S$};
\node (T) [below of=X] {$T$};
\draw[->] (AS) to node {}(A);
\draw[->] (AS) to node {}(B);
\draw[->] (X) to node {}(AS);
\draw[->] (X) to node {}(T);
\draw[->] (T) to node {}(A);
\draw[->] (T) to node {}(B);

\end{tikzpicture}
\end{center}

On the other hand, going along the top of the square and then the right side leads to the diagram
\begin{center}
\begin{tikzpicture}[->,>=stealth',node distance=1.5cm, auto]
 \node (A) {$A$};
 \node (AT) [right of=A,xshift=2cm] {$A\times_A T$};
 \node (B) [right of=AT,xshift=2cm] {$B$};
 \node (AX) [above of=AT] {$A\times_A X$};
 \node (AS) [above of=AX] {$A\times_A S$};
 \node (T) [below of=AT] {$T$};
 \node (T1) [below of=T] {$T$};
\draw[->] (AS) to node {}(A);
\draw[->] (AS) to node {}(B);
\draw[->] (AX) to node {}(AS);
\draw[->] (AX) to node {}(AT);
\draw[->] (T) to node {$\exists !\,\cong$}(AT);
\draw[->] (T) to node {$1$}(T1);
\draw[->] (T1) to node {}(A);
\draw[->] (T1) to node {}(B);
\end{tikzpicture}
\end{center}
Taking this pullback (using the inverse of the given canonical isomorphism from $T$ to $A\times_A T$) this becomes
\begin{center}
\begin{tikzpicture}[->,>=stealth',node distance=2.25cm, auto]
\node (A) {$A$};
\node (AX) [right of=A] {$A\times_A X$};
\node (B) [right of=X] {$B$};
\node (AS) [above of=X] {$A\times_A S$};
\node (T) [below of=X] {$T$};
\draw[->] (AS) to node {}(A);
\draw[->] (AS) to node {}(B);
\draw[->] (AX) to node {}(AS);
\draw[->] (AX) to node {}(T);
\draw[->] (T) to node {}(A);
\draw[->] (T) to node {}(B);

\end{tikzpicture}
\end{center}
Since $A\times_A X \cong X$ is a canonical isomorphism, this last diagram is equivalent to the one we arrived at going the other way around the naturality square. This proves that the right unitor $r_{A,B}$ really is a natural isomorphism; the proof of this fact for $l_{A,B}$ is analogous. We have just completed
\begin{lemma} The right and left unitors as defined above are natural isomorphisms.
\end{lemma}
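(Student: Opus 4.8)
The plan is to follow the recipe used repeatedly in this paper: reduce every asserted equality of $2$-cells to an explicit identification of pullback objects, exploiting the elementary fact that a pullback one of whose legs is an identity --- or, more generally, an isomorphism --- is canonically isomorphic to the opposite vertex, together with the observation that such a canonical isomorphism is automatically an isomorphism of spans of spans. I would carry this out in three steps.

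First, I would confirm that each component $r_S$ (and symmetrically $l_S$) is a well-defined invertible span of spans. Well-definedness is the assertion that the diagram defining $r_S$ commutes, which is immediate: both legs of the apex $S$ down to $A$ and to $B$ are the structure maps of $S$, since the canonical isomorphism $A\times_A S\cong S$ is compatible with the projections. For invertibility one notes that the inverse of the canonical isomorphism $S\to A\times_A S$ is again a morphism of spans over $A$ and $B$, so it assembles into a span of spans in the reverse direction, and the two vertical composites are canonically isomorphic to $\mbox{Id}_{A\times_A S}$ and $\mbox{Id}_S$ by the identity-leg fact --- the same ``universal arrow'' argument used for $j,j^{-1}$ in Lemma 1.

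Second, and this is the substantive step, I would verify the naturality square for $r$: for every $2$-cell $_{S}X_{T}$ the vertical composites $r_T\circ(\mbox{Id}_{_{A}A_{A}}\,;\,X)$ and $X\circ r_S$ must coincide as isomorphism classes of spans of spans. Going down the left edge and then along the bottom, $X\circ r_S$ is a pullback over $S$ one of whose legs (the leg of $r_S$ into its target copy of $S$) is $1_S$; hence this pullback is canonically $X$, carrying the structure maps $X\to S\xrightarrow{\cong}A\times_A S$ and $X\to T$. Going along the top edge and then down the right edge, $\mbox{Id}_{_{A}A_{A}}\,;\,X$ has apex $A\times_A X$, and composing it vertically with $r_T$ is a pullback over $A\times_A T$ one of whose legs is the canonical isomorphism $T\xrightarrow{\cong}A\times_A T$; hence this pullback is canonically $A\times_A X$, carrying the structure maps $A\times_A X\to A\times_A S$ and $A\times_A X\to A\times_A T\xrightarrow{\cong}T$. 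It then remains to observe that the canonical isomorphism $A\times_A X\cong X$ intertwines these two presentations --- that is, it commutes with the legs to $A\times_A S$, to $T$, and hence with the two structure maps to the base objects $A$ and $B$ --- which is the usual short computation with the defining universal property of the pullback $A\times_A S$. This produces the required isomorphism of spans of spans, so the square commutes at the level of $2$-cells.

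Third, I would dispatch the left unitor by the mirror-image argument: replace $A$ by $B$ throughout, take pullbacks over $B$ in place of over $A$, and invoke the canonical isomorphisms $S\times_B B\cong S$ and $X\times_B B\cong X$; nothing else changes. The only real obstacle I anticipate is bookkeeping rather than conceptual difficulty: one must track carefully which canonical isomorphisms (the instances of $A\times_A(-)\cong(-)$ and the identity-leg pullback isomorphisms) enter on each side, and check that each is genuinely an isomorphism of spans of spans, not merely of underlying objects of $\mathcal{C}$. Once that compatibility is in hand --- as one sees by drawing the explicit pullback diagrams --- naturality of $r_{A,B}$ and $l_{A,B}$ follows, establishing the lemma.
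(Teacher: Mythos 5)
Your proposal follows essentially the same route as the paper: both compute the left-then-bottom composite as a pullback with an identity leg (yielding $X$), the top-then-right composite as a pullback with an isomorphism leg (yielding $A\times_A X$), and conclude via the canonical isomorphism $A\times_A X\cong X$, with the left unitor handled by symmetry. Your additional spelling-out of the invertibility of the components $r_S$, $l_S$ only makes explicit what the paper asserts as ``easily seen.''
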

\begin{proof} Above.
\end{proof}

For the triangle identity, we need to see that for any pair of spans $_{A}S_B,_{B}S'_{C}$, the following diagram commutes:

\begin{center}
\begin{tikzpicture}[->,>=stealth',node distance=2.5cm, auto]
\node (SBS1) {$S\times_B (B\times_B S')$};
\node (SBS12) [right of=SBS1,xshift=1cm] {$(S\times_B B)\times_B S'$};
\node (SS1) [below of=SBS1,xshift=1.75cm] {$S\times_B S'$};
\draw[->] (SBS1) to node {$a_{S,B,S'}$}(SBS12);
\draw[->] (SBS1) to node [swap]{$S\times r_{B,S'}$}(SS1);
\draw[->] (SBS12) to node {$l_{S,B}\times S'$}(SS1);
\end{tikzpicture}
\end{center}

The left side of this triangle is the invertible span of spans
\begin{center}
\begin{tikzpicture}[->,>=stealth',node distance=2.4cm, auto]
\node (A) {$A$};
\node (SS1) [right of=A] {$S\times_B S'$};
\node (B) [right of=SS1] {$B$};
\node (SBS1) [above of=SS1] {$S\times_B (B\times_B S')$};
\node (SS12) [below of=SS1] {$S\times_B S'$};
\draw[->] (SBS1) to node {}(A);
\draw[->] (SBS1) to node {}(B);
\draw[->] (SS12) to node {}(A);
\draw[->] (SS12) to node {}(B);
\draw[->] (SS1) to node {$1\times \cong$}(SBS1);
\draw[->] (SS1) to node {$1$} (SS12);
\end{tikzpicture}
\end{center}

Going along the top of the triangle and then the right side, we consider the diagram
\begin{center}
\begin{tikzpicture}[->,>=stealth',node distance=2.75cm, auto]
 \node (A) {$A$};
 \node (SBS) [right of=A,xshift=2cm] {$(S\times_B B)\times_B S'$};
 \node (B) [right of=SBS,xshift=2cm] {$B$};
 \node (L) [above of=SBS] {$L_{S,B,S'}$};
 \node (SBS1) [above of=L] {$S\times_B (B\times_B S')$};
 \node (SS1) [below of=SBS] {$S\times_B S'$};
 \node (SS2) [below of=SS1] {$S\times_B S'$};
\draw[->] (SBS1) to node {}(A);
\draw[->] (SBS1) to node {}(B);
\draw[->] (L) to node {$\exists !\,k^{-1}$}(SBS1);
\draw[->] (L) to node {$\exists !\,j^{-1}$}(SBS);
\draw[->] (SS1) to node {$\cong \times 1$}(SBS);
\draw[->] (SS1) to node {$1$}(SS2);
\draw[->] (SS2) to node {}(A);
\draw[->] (SS2) to node {}(B);
\end{tikzpicture}
\end{center}
To deal with this diagram, recall the basic fact about pullbacks that if both legs of the pullback diagram are isomorphisms, the pullback object is canonically isomorphic to either object below:
\begin{center}
\begin{tikzpicture}[->,>=stealth',node distance=2.4cm, auto]
\node (S) {$S$};
\node (SS1) [above of=S,right of=S] {$S\times_B S'\cong S$};
\node (S1) [below of=SS1,right of=SS1] {$S'$};
\node (B) [below of=S,right of=S] {$B$};
\draw[->] (S) to node [swap]{$p$}(B);
\draw[->] (S1) to node {$q$}(B);
\draw[->] (SS1) to node [swap]{$1$}(S);
\draw[->] (SS1) to node {$q^{-1}\circ p$}(S1);
\end{tikzpicture}
\end{center}
(We omit the analogous diagram showing $S\times_B S'\cong S'$.) Therefore, the pullback of $L_{S,B,S'}$ and $S\times_B S'$ over $(S\times_B B)\times_B S'$ is canonically isomorphic to $S\times_B S'$, and we again obtain the diagram seen earlier from taking the left side of the triangle, as required. This establishes
\begin{lemma} The right and left unitors satisfy the triangle identity.
\end{lemma}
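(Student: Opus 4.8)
The plan is to verify the triangle identity by the same device used throughout the paper: realize each of the two composite $2$-cells in the triangle as an explicit span of spans, up to canonical isomorphism, and check that the two agree. First I would record the left-hand leg of the triangle. The right unitor component $r_{B,S'}$ has apex $S'$, carrying the canonical isomorphism $S'\cong B\times_B S'$ on one leg and $1_{S'}$ on the other; horizontally composing with $\mbox{Id}_S$ produces the invertible span of spans with apex $S\times_B S'$, whose legs are $1\times\cong\colon S\times_B S'\to S\times_B(B\times_B S')$ and $1\colon S\times_B S'\to S\times_B S'$, together with the evident maps down to $A$ and $B$. This is precisely the span of spans displayed just above the statement.

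Next I would compute the composite obtained by traversing the top of the triangle via the associator $a_{S,B,S'}$ and then the right side via $l_{S,B}\times S'$. The associator component at the triple $(S,B,S')$ is the invertible span of spans with apex $L_{S,B,S'}$, the limit of the base diagram $S\to B\leftarrow B\to B\leftarrow S'$, with legs $k^{-1}$ to $S\times_B(B\times_B S')$ and $j^{-1}$ to $(S\times_B B)\times_B S'$. The $2$-cell $l_{S,B}\times S'$ is the horizontal composite of the left unitor at $S$ with $\mbox{Id}_{S'}$; it has apex $S\times_B S'$, with legs $\cong\times 1$ to $(S\times_B B)\times_B S'$ and $1$ to $S\times_B S'$. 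Therefore the vertical composite of these two $2$-cells is the pullback of $L_{S,B,S'}$ and $S\times_B S'$ over $(S\times_B B)\times_B S'$, equipped with the induced maps to $A$ and $B$.

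The crux is the same observation that drives the unitor arguments: both legs of this last pullback are isomorphisms. The leg $j^{-1}\colon L_{S,B,S'}\to(S\times_B B)\times_B S'$ is an isomorphism because $L_{S,B,S'}$ is canonically isomorphic to either iterated pullback of its base diagram (this was already established when the associator was constructed), and the leg $\cong\times 1\colon S\times_B S'\to(S\times_B B)\times_B S'$ is an isomorphism because $S\times_B B\cong S$ canonically. Invoking the basic fact recorded earlier --- that a pullback both of whose legs are isomorphisms is canonically isomorphic to either vertex --- the vertical composite is canonically isomorphic to $S\times_B S'$, and it then remains only to check that under this identification its legs to $A$ and $B$ coincide with those of the left-hand span of spans $S\times r_{B,S'}$; this recovers the span of spans we arrived at from the left side of the triangle, so the triangle commutes. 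I expect the only real care needed --- the ``main obstacle'' such as it is --- to be the bookkeeping that the canonical isomorphism is genuinely an isomorphism of spans of spans rather than merely of objects of $\cat$, i.e.\ that it is compatible with the maps down to $A$ and $B$; but, as in every earlier instance, this is immediate once one notes that all the arrows involved are assembled from pullback projections together with the commuting data of $\mbox{Id}_S$, $r_{B,S'}$, $l_{S,B}$, and the associator.
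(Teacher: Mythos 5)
Your proposal is correct and follows essentially the same route as the paper: identify the left leg as the span of spans with apex $S\times_B S'$, compute the top-then-right composite as the pullback of $L_{S,B,S'}$ and $S\times_B S'$ over $(S\times_B B)\times_B S'$, and conclude via the fact that a pullback whose legs are both isomorphisms is canonically isomorphic to either vertex. Your added remarks on why each leg is an isomorphism and on checking compatibility with the maps to $A$ and $B$ only make explicit what the paper leaves implicit.
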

\begin{proof} See discussion above.
\end{proof}
\section{Main result}
\begin{theorem} For any category $\mathcal{C}$ with pullbacks and a terminal object, $\mbox{Span}_2(\mathcal{C})$ is a bicategory.
\end{theorem}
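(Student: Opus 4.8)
The plan is to recognize that the theorem is now a matter of assembling the preceding lemmas and comparing the resulting package against a standard definition of a bicategory (for instance B\'enabou's): one must exhibit a collection of objects, a hom-category for each ordered pair of objects, an identity-assigning functor from the terminal category for each object, a horizontal composition bifunctor for each ordered triple of objects, an associator natural isomorphism, left and right unitor natural isomorphisms, and then verify the pentagon and triangle coherence axioms. Each of these items has already been constructed, and each axiom already checked, in Sections 2--4.

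Concretely, I would run down the definition and cite the relevant lemma at each step. The objects of $\mbox{Span}_2(\mathcal{C})$ are taken to be the objects of $\mathcal{C}$. For each pair $A,B$ the hom-category is $\mbox{Span}(A,B)$, which is a category by Lemma 1. For each object $A$ the identity-assigning functor $I_A\colon 1\to\mbox{Span}(A,A)$ from the terminal category is the one produced in Lemma 2. For each triple $A,B,C$ the horizontal composition is the bifunctor $;_{A,B,C}\colon\mbox{Span}(A,B)\times\mbox{Span}(B,C)\to\mbox{Span}(A,C)$ of Lemma 3; in particular the interchange law holds, since bifunctoriality of $;_{A,B,C}$ is exactly that law and is part of the content of Lemma 3. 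The associator $a_{A,B,C,D}$ is the family of invertible spans of spans defined just before Lemma 4, and it is a natural isomorphism by Lemma 4; the left and right unitors $l_{A,B}$ and $r_{A,B}$ are the families defined in Section 4, which are natural isomorphisms by Lemma 6.

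It then remains only to record the coherence axioms. The pentagon identity for the associator is precisely Lemma 5, and the triangle identity relating the associator to the unitors is precisely Lemma 7. Hence all of the data and all of the axioms in the definition of a bicategory are present, and $\mbox{Span}_2(\mathcal{C})$ is a bicategory.

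I do not anticipate a genuine obstacle at this point. The difficult work, namely showing that differently parenthesized iterated pullbacks are canonically isomorphic \emph{as spans of spans}, establishing the interchange law, and pushing the same limit-comparison technique through the pentagon, was already carried out in the lemmas, each of which uses nothing beyond the existence of pullbacks in $\mathcal{C}$. The only thing to watch is the bookkeeping: one should fix a reference definition of bicategory and check that the variances, domains, and directions of the transformations $a$, $l$, $r$ match its conventions, and that the standing conventions (pullbacks chosen only up to canonical isomorphism, and $2$-cells taken to be isomorphism classes of spans of spans) are invoked consistently, so that the composites appearing in the pentagon and triangle are honest equalities of $2$-cells rather than mere isomorphisms. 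With those conventions fixed, the proof is a one-line appeal to Lemmas 1--7.
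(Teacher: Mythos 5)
Your proposal is correct and is essentially identical to the paper's own proof, which likewise assembles Lemmas 1 through 7 against the definition of a bicategory in a single paragraph; your version is merely more explicit about which lemma supplies which piece of data or axiom.
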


\begin{proof} Taking objects of $\mbox{Span}_2(\mathcal{C})$ to be the objects of $\mathcal{C}$, morphisms to be spans in $\mathcal{C}$ (objects of the hom-categories $\mbox{Span}(A,B)$ as $A,B$ range over the objects of $\mathcal{C}$), and 2-cells to be isomorphism classes of spans of spans in $\mathcal{C}$ (morphisms of the hom-categories $\mbox{Span}(A,B)$), the contents of Lemmas 1 through 7 establish that $\mbox{Span}_2(\mathcal{C})$ is a bicategory.\\\\
\end{proof}
\emph{Remark:} This result is more flexible than as stated, since given a category $\mathcal{C}$ with a terminal object in which not all pullbacks exist, we can restrict our attention to the subcollection of cospans for which pullbacks \emph{do} exist. This generates a subcategory in which spans are automatically composable, and then Lemmas 1 through 7 all hold to give a bicategory.

\section{An Application: Cobordisms}
The main result of this paper can be used to reduce the work needed in proving there is a bicategory where objects are $(n-2)$-dimensional manifolds without boundary, morphisms are $(n-1)$-manifolds with boundary acting as cobordisms, and 2-morphisms are $n$-manifolds with corners acting as cobordisms of cobordisms.  The idea is that a cobordism of manifolds $M \rightarrow X \leftarrow N$ is a particular kind of cospan, where the arrows are inclusion maps; furthermore, a cobordism of cobordisms is a particular cospan of cospans, again with inclusion maps as arrows. By working with the opposite category, we can apply our result about spans and spans of spans to deduce we have a bicategory, however, there is the obstacle that pushouts do not generally exist in the category of manifolds with corners. This is dealt with simply by restricting attention to certain manifolds with corners for which these pushouts do exist, namely, appropriately `collared' manifolds. We now give a brief review of the definitions and results involved, following Laures \cite{Lau}. Note that a definition of cobordism bicategory also following Laures' framework is given in Schommer-Pries' \cite{S-P}, albeit without details verifying all the axioms of a bicategory. Cobordisms with corners are also addressed in Morton's \cite{Mor} in the context of double bicategories.\\
\\
A \emph{differentiable manifold with corners} is a second-countable Hausdorff space $X$ with a maximal atlas of compatible charts
$$\phi : U \rightarrow [0,\infty)^n,$$
where $U$ is open in $X$. Compatibility means that for any two charts $(\phi_1,U_1),(\phi_2,U_2)$, the composite 
$$\phi_2 \circ \phi_1^{-1} : \phi_1(U_1 \cap U_2) \rightarrow \phi_2(U_1 \cap U_2)$$
is a diffeomorphism. For any $U\subset X$ and $x\in U$, it can be shown that the number of zeros in the coordinate representation $\phi(x)$ is the same for any chart $(\phi,U)$. This number, denoted $c(x)$ and called the \emph{depth of} $x$, measures the degree to which $x$ is a corner point; depth 0 points are in the interior of $X$, depth 1 points are on the boundary of $X$, depth 2 points resemble the corner point of a quadrant in $\mathbb{R}^2$, and so on. A \emph{connected face} is the closure of some component of the boundary, $\{x\in X \,|\,c(x)=1\}$, and a \emph{face} is a disjoint union of connected faces. A \emph{manifold with faces} is a manifold with corners such that each $x$ belongs to $c(x)$ many connected faces. As J\"anich points out in \cite{Jan}, faces of a manifold with faces are in fact manifolds with faces themselves.\\
\\
Now an $\langle n \rangle $-manifold is a manifold with faces together with an ordered $n$-tuple $(\partial_0 X,\partial_1 X,\ldots,\partial_{n-1} X)$ of faces of $X$ satisfying 
$$(1)\,\,\,\, \partial_0 \cup \cdots \cup \partial_{n-1} X = \partial X,\,\,\,\mbox{and}$$
$$(2)\,\,\,\, \partial_i X \cap \partial_j X \,\,\mbox{is a face of}\,\partial_i X\,\mbox{and of}\,\partial_j X\,\mbox{for all}\,i\neq j.$$

Laures showed that given an $\langle n \rangle$-manifold $X$, we can obtain in a canonical way a functor $X: \underline{2}^n \rightarrow \mbox{Top}$, where $\underline{2}$ denotes the category with two objects and one arrow,
$$0\rightarrow 1.$$
This functor is explained below shortly. For an object $a=(a_0,\ldots,a_{n-1})\in \underline{2}^n$, denote its complementary object $(1,\ldots,1)-a$ by $a'$. Also let $e_i$ denote the standard $i^{\tiny{\mbox{th}}}$ basis vector in $\mathbb{R}^n$, so $e_1=(1,0,\ldots,0)$, etc. Then the action of the functor $X$ on objects $a$ is defined by
$$X(a)=\bigcap_{i\in\{i\,|\,a\leq e'_i\}} \partial_i X\,\,\,\,\mbox{for}\,\,\,\, a\neq 0',$$
$$X(0')=X.$$
A morphism $b<a$ in the poset category $\underline{2}^n$ is sent via the functor $X$ to the natural inclusion map between topological spaces, $X(b)$ into $X(a)$. Now, the target category of $X$ can actually be made more specific, from general topological spaces to manifolds with corners, without altering how the functor is defined. This is because the action of $X$ on objects is given by an intersection of faces of $X$, and by condition $(2)$ of being an $\langle n \rangle$-manifold, this intersection is itself a face of each face being intersected. Since faces are manifolds with faces, the resulting intersection is, in particular, a manifold with corners. Furthermore, using the definition of smooth map between manifolds with corners found in \cite{Joy}, the inclusion map $i:\partial X \rightarrow X$ is smooth; this implies that for an $\langle n \rangle $-manifold $X$, each inclusion $i_j : \partial_j X \rightarrow X$ is smooth, and that $i_{j,k}: \partial_j X \rightarrow \partial_k X$ is smooth, when this inclusion makes sense.\\\\
Laures went on to show that there is a collared version of this functor, denoted $C$, defined on objects $a\in \underline{2}^n$ by $C(a)=\mathbb{R}^n_+(a')\times X(a)$, and on morphisms $a < b$ by $C(a<b):\mathbb{R}^n_+(a') \times X(a) \hookrightarrow \mathbb{R}^n_+(b') \times X(b)$, where this arrow has the property of being a topological embedding such that its restriction to $\mathbb{R}^n_+(b') \times X(a)$ is the inclusion map $\mbox{id} \times X(a<b)$. This functor $C$, like $X$, can also be regarded as having manifolds with corners as its target, as opposed to general topological spaces.\\\\
Now suppose $X$ is an $\langle n \rangle $-manifold such that each face of the $n$-tuple $(\partial_0 X,\partial_1 X,\ldots,\partial_{n-1} X)$ is itself a disjoint union, $\partial_i X=s_iX \sqcup t_iX$ (we think of $s_iX$ as the `source' of $\partial_i X$, and $t_iX$ as the `target'). We call such an $X$ a \emph{cubical} $\langle n \rangle $-\emph{manifold}, because the collection of all possible intersections of face components, together with the interior of entire manifold $X$, make up a total of $3^n$ objects that can be naturally arranged into an $n$-dimensional hypercube diagram (proved below). As an illustration of this, consider the solid unit square, the most basic cubical $\langle 2 \rangle $-manifold. Here $s_1X, t_1X$ are opposite closed segments of unit length, as are $s_2X,t_2X$. Taking all possible intersections of these face components yields 4 corners (from pairwise intersection of different non-disjoint components) and 4 edges (the face components themselves); including the interior of $X$ totals $4+4+1=9=3^2$ objects. Following this example, it is clear that any cubical $\langle n \rangle$-manifold can have its $2n$ face components regarded as the $2n$ faces of an $n$-dimensional hypercube, with each pair of corresponding face components in the decomposition being opposite each other. Then using the well known formula for the total number of cellular components to an $n$-hypercube,
$$\sum_{i=0}^n 2^{n-i} {n \choose i} = 3^n,$$
we have a proof of the above claim about cubical $\langle n \rangle$-manifolds, since all possible intersections of face components result in all possible cellular components except the interior.
The case of a cubical $\langle 2 \rangle $-manifold with its parts arranged into a square is illustrated by the diagram below.
\begin{center}
\begin{tikzpicture}[->,>=stealth',node distance=2.4cm, auto]
\node (X0) {$s_0X$};
\node (1) [left of=X0] {$s_0X \cap s_1X$};
\node (3) [right of=X0] {$s_0X \cap t_1X$};
\node (X) [below of=X0] {$X$};
\node (4) [left of=X] {$s_1X$};
\node (6) [right of=X] {$t_1X $};
\node (X1) [below of=X] {$t_0X $};
\node (7) [left of=X1] {$t_0X \cap s_1X $};
\node (9) [right of=X1] {$t_0X \cap t_1X$};
\draw[->] (1) to node {}(X0);
\draw[->] (3) to node {}(X0);
\draw[->] (1) to node {}(4);
\draw[->] (X0) to node {}(X);
\draw[->] (3) to node {}(6);
\draw[->] (4) to node {}(X);
\draw[->] (6) to node {}(X);
\draw[->] (7) to node {}(4);
\draw[->] (X1) to node {}(X);
\draw[->] (9) to node {} (6);
\draw[->] (7) to node {}(X1);
\draw[->] (9) to node {} (X1);
\end{tikzpicture}
\end{center} 

We can summarize the above paragraph by saying cubical $\langle n \rangle$-manifolds $X$ give a canonical functor $X: K^n \rightarrow \langle n \rangle$-$\mbox{Man}$, where $K$ is the `walking cospan' category $0 \rightarrow 2 \leftarrow 1$. This functor works the same way as the canonical functor $X: \underline{2^n} \rightarrow \mbox{Top}$ defined by Laures, except the category being raised to the power $n$ has an extra object and arrow due to the extra decomposition coming from our cubical $\langle n \rangle$-manifold. We call this $X$ an $n$-\emph{tuple cobordism}. Furthermore, applying the same construction but using the collared version of Laures' functor, we obtain a \emph{collared} $n$-\emph{tuple cobordism} $C: K^n \rightarrow \langle n \rangle$-$\mbox{Man}$; that is, any cubical $\langle n \rangle$-manifold can be `collared'. An example of the result of applying this functor to collar a cubical $\langle 2 \rangle $-manifold is illustrated below.
\begin{center}

 \begin{tikzpicture}[every node/.style={coordinate}]
  \node (LL) at (1,0) {};
  \node (LLd) at (1,-0.1) {};
  \node (LLl) at (0.9,0) {};
  \node (LLc) at (0.9,-0.1) {};
  \node (LR) at (2,0) {};
  \node (LRd) at (2,-0.1) {};
  \node (LRr) at (2.1,0) {};
  \node (LRc) at (2.1,-0.1) {};
  \node (UL) at (0,3) {};
  \node (ULl) at (-0.1,3) {};
  \node (ULu) at (0,3.1) {};
  \node (ULc) at (-0.1,3.1) {};
  \node (LU) at (1,3) {};
  \node (LUr) at (1.1,3) {};
  \node (LUu) at (1,3.1) {};
  \node (LUc) at (1.1,3.1) {};
  \node (RU) at (2,3) {};
  \node (RUl) at (1.9,3) {};
  \node (RUu) at (2,3.1) {};
  \node (RUc) at (1.9,3.1) {};
  \node (UR) at (3,3) {};
  \node (URr) at (3.1,3) {};
  \node (URu) at (3,3.1) {};
  \node (URc) at (3.1,3.1) {};

  \filldraw[fill=blue, fill opacity=0.3] (LLc) -- (LRc) -- (LRr) -- (LLl) --cycle;
  \filldraw[fill=blue, fill opacity=0.3] (ULc) -- (LUc) -- (LUr) -- (ULl) --cycle;
  \filldraw[fill=blue, fill opacity=0.3] (RUc) -- (URc) -- (URr) -- (RUl) --cycle;

  \filldraw[fill=red, fill opacity=0.3] (LLc) -- (LLl) .. controls +(90:1) and +(270:1) .. (ULl) -- (ULc) -- (ULu) -- (UL)
  .. controls +(270:1) and +(90:1) .. (LL) -- (LLd) --cycle;

  \filldraw [fill=red, fill opacity=0.3] (LRc) -- (LRr) .. controls +(90:1) and +(270:1) .. (URr) -- (URc) -- (URu) -- (UR)
  .. controls +(270:1) and +(90:1) .. (LR) -- (LRd) --cycle;

  \filldraw [fill=red, fill opacity=0.3] (LUc) -- (LUr) .. controls +(270:0.55) and +(270:0.55) .. (RUl) -- (RUc) -- (RUu) -- (RU)
  .. controls +(270:0.7) and +(270:0.7) .. (LU) -- (LUu) --cycle;

  \draw (UL) -- (ULl) (LL) -- (LLl) (LU) -- (LUr) (RU) -- (RUl) (UR) -- (URr) (LR) -- (LRr);

  \node[dot] (dLL) at (1,0) {};
  \node[dot] (dLLd) at (1,-0.1) {};
  \node[dot] (dLLl) at (0.9,0) {};
  \node[dot] (dLLc) at (0.9,-0.1) {};
  \node[dot] (dLR) at (2,0) {};
  \node[dot] (dLRd) at (2,-0.1) {};
  \node[dot] (dLRr) at (2.1,0) {};
  \node[dot] (dLRc) at (2.1,-0.1) {};
  \node[dot] (dUL) at (0,3) {};
  \node[dot] (dULl) at (-0.1,3) {};
  \node[dot] (dULu) at (0,3.1) {};
  \node[dot] (dULc) at (-0.1,3.1) {};
  \node[dot] (dLU) at (1,3) {};
  \node[dot] (dLUr) at (1.1,3) {};
  \node[dot] (dLUu) at (1,3.1) {};
  \node[dot] (dLUc) at (1.1,3.1) {};
  \node[dot] (dRU) at (2,3) {};
  \node[dot] (dRUl) at (1.9,3) {};
  \node[dot] (dRUu) at (2,3.1) {};
  \node[dot] (dRUc) at (1.9,3.1) {};
  \node[dot] (dUR) at (3,3) {};
  \node[dot] (dURr) at (3.1,3) {};
  \node[dot] (dURu) at (3,3.1) {};
  \node[dot] (dURc) at (3.1,3.1) {};

 \end{tikzpicture}
\end{center}

Notice that if the vertical arrows $s_0X \cap s_1X \rightarrow s_1X \leftarrow t_0X \cap s_1X$ and $s_0X \cap t_1X \rightarrow t_1X \leftarrow t_0X \cap t_1X$ in the above diagram are in fact identity morphisms, everything reduces to a collared cobordism of cobordisms. This is exactly the kind of structure amenable to our main result after using the opposite category, since all relevant pullbacks of these collared cobordisms exist. We thus have
\begin{corollary} There is a bicategory where objects are $(n-2)$-manifolds without boundary, morphisms are $(n-1)$-dimensional collared cobordisms of those manifolds, and 2-morphisms are $n$-dimensional collared cobordisms of cobordisms.
\end{corollary}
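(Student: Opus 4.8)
The plan is to exhibit the desired bicategory as a restriction of $\mathrm{Span}_2(\mathcal{C})$ for a suitably chosen $\mathcal{C}$, and then to invoke Theorem~8 together with the Remark following it. Let $\mathcal{M}$ be the category of smooth manifolds with corners (smooth maps as in \cite{Joy}), and set $\mathcal{C}:=\mathcal{M}^{\mathrm{op}}$. The empty manifold $\emptyset$ is initial in $\mathcal{M}$, hence terminal in $\mathcal{C}$, so $\mathcal{C}$ has a terminal object. Pullbacks in $\mathcal{C}$ are pushouts in $\mathcal{M}$, and these do not all exist; but by the Remark after Theorem~8 it is enough to work inside the subcollection of cospans of $\mathcal{M}$ (equivalently, spans of $\mathcal{C}$) along which pushouts \emph{do} exist, provided that subcollection is closed under composition.

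First I would fix the dictionary between the two pictures. An object of the target bicategory is an $(n-2)$-manifold $M$ without boundary, viewed as an object of $\mathcal{C}$. A morphism $M\to N$ is a collared $(n-1)$-dimensional cobordism, i.e.\ a cospan $M\hookrightarrow W\hookleftarrow N$ in $\mathcal{M}$ whose legs are the collared boundary inclusions; read in $\mathcal{C}$ this is a span $_{M}W_{N}$. A $2$-morphism from $W$ to $W'$ is a collared $n$-dimensional cobordism of cobordisms $V$, which is exactly a cospan of cospans $W\hookrightarrow V\hookleftarrow W'$ in $\mathcal{M}$ compatible with the common `horizontal' boundaries $M$ and $N$ --- precisely the cubical $\langle 2\rangle$-structure discussed in the previous section, specialized so that the two vertical inclusions into $M$ and into $N$ are identities. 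In $\mathcal{C}$ this is a span of spans $_{W}V_{W'}$, and I would take the $2$-cells to be their isomorphism classes, i.e.\ diffeomorphism classes of collared cobordisms of cobordisms rel boundary, matching the definition of the $2$-cells of $\mathrm{Span}_2(\mathcal{C})$.

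The analytic core of the proof is to verify that all pullbacks in $\mathcal{C}$ needed by the $\mathrm{Span}_2$ construction actually exist within this subcollection: that gluing two collared cobordisms (resp.\ two collared cobordisms of cobordisms) along a shared collared boundary piece yields again a smooth manifold with corners with the correct system of faces, and that this glued manifold has the universal property of a pushout in $\mathcal{M}$. Here I would use Laures' collar functor $C$ from \cite{Lau}: the $\mathbb{R}^n_+(a')$-collar factors give, near the gluing locus, an explicit product model for a neighborhood on each side, and these patch to compatible smooth charts on the union; the universal property then follows since any pair of smooth maps out of the two pieces that agree on the shared face is smooth across the collar. One must also check closure under composition --- that the composite (via such a pushout) of two collared cobordisms is again a collared cobordism, so the hypothesis of the Remark is met --- which is again part of Laures' framework, using J\"anich's observation \cite{Jan} that faces of a manifold with faces are themselves manifolds with faces. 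I expect this existence-and-universality check for collared pushouts to be the main obstacle; the rest is bookkeeping.

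Granting that, the conclusion is immediate. Restricting $\mathrm{Span}_2(\mathcal{C})$ to the objects that are $(n-2)$-manifolds without boundary, to the spans arising from collared cobordisms, and to the isomorphism classes of spans of spans arising from collared cobordisms of cobordisms, Theorem~8 and the Remark yield a bicategory of exactly the asserted form, completing the proof.
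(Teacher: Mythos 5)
Your proposal takes essentially the same route as the paper: pass to the opposite category of manifolds with corners, identify collared cobordisms and collared cobordisms of cobordisms with spans and spans of spans there, and invoke the main theorem together with the Remark allowing restriction to the cospans along which pushouts exist. If anything, you are more explicit than the paper about the one substantive point --- that Laures' collars are what guarantee the existence and universal property of the needed pushouts --- which the paper asserts without detailed verification.
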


\section{Down the road}
Showing $\mbox{Span}_2{\mathcal{C}}$ is a bicategory is just the first step of a larger project. Assuming $\mathcal{C}$ has not only pullbacks but a product, we intend to prove $\mbox{Span}_2{\mathcal{C}}$ is a symmetric monoidal bicategory under the product inherited from $\mathcal{C}$. Verifying the axioms for a symmetric monoidal bicategory (see \cite{Sta}) one-by-one is arduous, so we will adopt a technique of Mike Shulman's for constructing symmetrical monoidal bicategories out of suitable symmetric monoidal double categories \cite{Sch}. 
\newpage

\setlength{\parskip}{0cm} 

\end{document}